\documentclass[12pt,a4paper]{amsart}
\setlength{\textwidth}{16.5cm}
\setlength{\oddsidemargin}{-2.5mm}
\setlength{\evensidemargin}{-0mm}
\setlength{\textheight}{23.5cm}
\setlength{\topmargin}{-.4cm}

\usepackage[abbrev]{amsrefs}
\usepackage{amssymb}
\usepackage{bbm}

\theoremstyle{plain}
 \newtheorem{theorem}{Theorem}[section]
 \newtheorem{lemma}[theorem]{Lemma}
 \newtheorem{corollary}[theorem]{Corollary}
 \newtheorem{proposition}[theorem]{Proposition}
 
\theoremstyle{definition}
 \newtheorem{definition}[theorem]{Definition}
 
\theoremstyle{remark}
 \newtheorem{remark}[theorem]{Remark}
 \newtheorem*{ack}{Acknowledgment}

\def\vol{\mathop{\mathrm{vol}}\nolimits}
\numberwithin{equation}{section}
%%%%%%%%%%%%%%%%%%%%%%%%%%%%%%%%%%%%%%%%

%%%%%%%%%%%%%%%%%%%%%%%%%%%%%%%%%%%%%%%%
\begin{document}
\title[]{Spectral convergence of high-dimensional spheres \\ to Gaussian spaces}
\author[]{Asuka Takatsu}
\address{Department of Mathematical Sciences, Tokyo Metropolitan University, Tokyo {192-0397}, Japan \&
 RIKEN Center for Advanced Intelligence Project (AIP), Tokyo Japan.}
\email{asuka@tmu.ac.jp}
\date{\today}
\keywords{Laplacian, eigenvalue problem, High-dimensional sphere, Gaussian space}
\subjclass[2020]{58J50, 35P20}
%%%
%%%
%%%%%%%%%%%%%%%%%%%%%%%%%%%%%%%%%%%%%%%%
\maketitle
\vspace{-17pt}
\begin{abstract}
We prove that the spectral structure on the $N$-dimensional standard sphere of radius $(N-1)^{1/2}$
compatible with a projection onto the first $n$-coordinates converges to the spectral structure on the $n$-dimensional Gaussian space with variance $1$ as $N\to \infty$.
We also show the analogue for the first Dirichlet eigenvalue problem on a ball in the sphere and {that} on a half-space in the Gaussian space.
\end{abstract}
%%%%%%%%%%%%%%%%%%%%%%%%%%%%%%%%%%%%%%%%%%%%%%%%%%
%%%%%%%%%%%%%%%%%%%%%%%%%%%%%%%%%%%%%%%%%%%%%%%%%%
%%%%%%%%%%%%%%%%%%%%%%%%%%%%%%%%%%%%%%%%%%%%%%%%%%
%%%%%%%%%%%%%%%%%%%%%%%%%%%%%%%%%%%%%%%%%%%%%%%%%%
%%%%%%%%%%%%%%%%%%%%%%%%%%%%%%%%%%%%%%%%%%%%%%%%%%
\section{Introduction}
A curvature-dimension condition $\mathrm{CD}(\kappa, N)$ imposes restriction on the spectra of the weighted Laplacian on a weighted manifold.
For example, the Lichnerowicz--Obata type eigenvalue estimate is known 
 (see \cite{CZ}*{Theorems 1,2}, \cite{Ke}*{Corollary~1.3}, \cite{LV}*{Theorem~5.34} and the references therein).
Here a \emph{weighted manifold} $(M,\mu)$ is a complete smooth $n$-dimensional Riemannian manifold $(M,g)$ equipped with a measure $\mu$ of the form
\[
\mu=\exp(-\Psi) \vol_M, 
\]
where $\Psi\in C^\infty(M)$ and $\vol_M$ denotes the Riemannian volume measure on $(M,g)$.
The \emph{weighted Laplacian} $\Delta_\mu$ on $(M,\mu)$ is defined as
\[
\Delta_{\mu} f:=\Delta_M f-g(\nabla_M \Psi, \nabla_M f )\qquad \text{for\ }f \in C^\infty(M), 
\]
where $\nabla_M$ and $\Delta_M$ stand for the gradient and the Laplacian on $(M,g)$, respectively,
so that the following integration by parts is satisfied
\[
\int_{M} g(\nabla_M f_1, \nabla_M f_2)d\mu=-\int_{M} f_1 \Delta_{\mu} f_2 d\mu
\qquad\text{for\ } f_1, f_2\in C^\infty_0(M).
\]
Given $\kappa \in \mathbb{R}$ and $N\in [n,\infty]$, 
we say that $(M,\mu)$ satisfies the \emph{curvature-dimension condition $\mathrm{CD}(\kappa, N)$} if 
\[
\mathrm{Ric}_M (v,v)+\mathrm{Hess}_M\Psi(v,v) -\frac{v(\Psi)^2}{N-n} \geq \kappa g(v,v) \qquad\text{for\ }v\in TM, 
\]
where $\mathrm{Ric}_M$ is the Ricci curvature tensor and $\mathrm{Hess}_M $ is the Hessian operator on $(M,g)$, respectively.
To make sense, we employ the convention that $\frac{1}{\infty}:=0, \frac{1}{0}:=+\infty$,
and $\infty \cdot 0:=0$.
A model space for comparison geometry under the condition $\mathrm{CD}(1, N)$ is 
the $N$-dimensional standard sphere of radius $(N-1)^{1/2}$ for $N\in \mathbb{N}$ with $N\geq2$, 
and the one-dimensional Gaussian space with variance $1$ for $N=\infty$.
%

%%%
For $N\in \mathbb{N}$ and $a>0$, let $\mathbb{S}^N(a)$ be the $N$-dimensional standard sphere of radius $a$.
%
%%%%%%%%%%%%%%%%%%%%%%%%%%%%%%%%%%%%%%%%%%%%%%%%%%%%%%%%%%%%
We denote by $\langle \cdot ,\cdot \rangle$ the Euclidean inner product and set $|\cdot|_2:=\langle\cdot, \cdot \rangle^{1/2}$.
For $n\in \mathbb{N}$ and $\alpha>0$,
we denote by $\gamma^n_{\alpha}$ the \emph{$n$-dimensional Gaussian measure with variance $\alpha^2$}, that is, 
\[
d\gamma^n_{\alpha}(x)=(2\pi \alpha^2)^{-\frac{n}{2}} \exp\left(-\frac{|x|_2^2}{2\alpha^2} \right)dx.
\]
The weighted manifold $\Gamma^n_{\alpha}:=(\mathbb{R}^n, \gamma^n_{\alpha})$ is called the \emph{$n$-dimensional Gaussian space with variance $\alpha^2$}.
Notice that 
a weighted manifold of $\mathbb{S}^N(a)$ equipped with its Riemannian volume measure satisfies $\mathrm{CD}( a^{-2}(N-1), N)$ 
and $\Gamma^n_{\alpha}$ satisfies $\mathrm{CD}(\alpha^{-2}, \infty)$, respectively.
Set
\[
S_N:=\mathbb{S}^N(\sqrt{N-1}), \qquad \gamma^n:=\gamma^n_1,\qquad \Gamma^n:=\Gamma^n_1.
\]

%%%
Since $\mathrm{CD}(1, \infty)$ can be regarded as the limit of $\mathrm{CD}(1, N)$ as $N\to \infty$,
the spectral structure on $\Gamma^n$ would be derived from the asymptotic behavior of that on $S_N$ as well.
For example, Borell~\cite{Bo}*{Theorem~3.1} and Sudakov--Cirel\cprime son~\cite{SC}*{Corollary~1} independently proved the Brunn--Minkowski inequality on $\Gamma^n$ by using that on $S_N$.
The Brunn--Minkowski inequality determines a domain minimizing the first Dirichlet eigenvalue under the restriction of the volume.
The key of the proof is the following asymptotic behavior, so-called Poincar\'e's theorem
(we refer to \cite{DF}*{Section 6} for the history of Poincar\'e's theorem).
Let $\sigma_N$ be the normalized Riemannian volume measure on $S_N$ to be a probability measure.
For $n,N\in \mathbb{N}$ with $n\leq N$, $p^N_n$ denotes the projection from $\mathbb{R}^{N+1}=\mathbb{R}^{n} \times \mathbb{R}^{N-n+1}$ onto $\mathbb{R}^n$ defined by 
\[
p^N_n(x,y):=x\qquad \text{for}\ (x,y) \in \mathbb{R}^n \times \mathbb{R}^{N-n+1}.
\]
Then the push-forward measure of $\sigma_{N}$ by the restriction of $p^N_n$ to $S_N$ satisfies 
\[
\lim_{N \to \infty} \frac{d \left( (p^N_n|_{S_N})_\sharp\sigma_{N}\right)}{dx}(x)=\frac{d \gamma^n}{dx}(x)
\qquad\text{for\ } x\in \mathbb{R}^n
\]
and $\{(p^N_n|_{S_N})_\sharp \sigma_{N}\}_{N\in \mathbb{N}}$ converges to $\gamma^n$ weakly as $N\to \infty$.
Since the weak convergence of probability measures on $\mathbb{R}^n$ is metrizable by the Prokhorov metric~$d_P$, 
%%%
it holds that 
\[
\lim_{N\to \infty} d_P((p^N_n|_{S_N})_\sharp \sigma_{N}, \gamma^{n})=0.
\]
%
%%%%%%%%%%%%%%%%%%%%%%%%%%%%%%%%%%%%%%%%%%%%%%%%%%

%%%%%%%%%%%%%%%%%%%%%%%%%%%%%%%%%%%%%%%%%%%%%%%%%%
Let $\iota_N :S_N \hookrightarrow \mathbb{R}^{N+1}$ be the inclusion map.
In contrast to Poincar\'e's theorem, Shioya and the author \cite{ST}*{Theorem 1.4} showed that
\[
\liminf_{N\to \infty} d_P( \iota_{N}{}_\sharp \sigma_{N}, \gamma^{N+1})>0.
\]
%%%%%
This suggests that the asymptotic behavior of the spectral structure on $S_N$ and $\Gamma^{N+1}$ are different.
Indeed, the multiplicity of the first nonzero eigenvalue on both of $S_N$ and $\Gamma^{N+1}$ are $N+1$,
while the multiplicity of the second nonzero eigenvalue on $S_N$ is $N(N+3)/2$ but that on $\Gamma^{N+1}$ is $(N+1)(N+2)/2$.
See \cite{Mil2018}*{Sections 2.1, 2.2} for instance. 
%%%%%%%%%%%%%%%%%%%%%%%%%%%%%%%%%%%%%%%%%%%%%%%%%%
Thus it is more appropriate to compare the spectral structure on $\Gamma^n$
with the compatible spectral structure on $S_N$ with~$p^N_n$, rather than the spectral structure on $S_N$ itself.
%%%

In this paper, we prove the convergence of eigenvalues on $\mathbb{S}^N(a_N)$ to those on $\Gamma^n_{\alpha}$
together with the convergence of the composition of $p^N_n$ and compatible eigenfunctions on $\mathbb{S}^N(a_N)$ 
to eigenfunctions on $\Gamma^n_{\alpha}$ as $N\to \infty$ 
when $\{a_N/\sqrt{N-1}\}_{N\geq n,2}$ converges to $\alpha$.
We also show the analogue for the first Dirichlet eigenvalue {problem} on a ball in~$\mathbb{S}^N(a_N)$ and {that} on a half-space in~$\Gamma^n_{\alpha}$.

We define some {notation} needed to state our theorems.
Let $\mathbb{N}_0$ denote the set of nonnegative integers.
Unless specified otherwise in this paper, let
\[
n,N\in \mathbb{N}\quad \text{with}\quad n,2\leq N, \qquad 
k\in \mathbb{N}_0, \qquad 
a,\alpha>0, \qquad
\theta \in (0,\pi),\qquad R\in \mathbb{R}.
\]
We shall for convenience denote a sequence $\{c_N\}_{N\geq N_0}$ by $\{c_N\}_{N}$.

For the rest of this paper, a weighted manifold $(M, \mu)$ is either $\mathbb{S}^N(a)$ equipped with its Riemannian volume measure $\vol_{\mathbb{S}^N(a)}$
or $\Gamma^n_{\alpha}=(\mathbb{R}^n, \gamma^n_{\alpha})$.
Note that $\Delta_{\vol_{\mathbb{S}^N(a)}}=\Delta_{\mathbb{S}^N(a)}$.
When it will introduce no confusion, we shall denote $(\mathbb{S}^N(a),\vol_{\mathbb{S}^N(a)})$ simply by $\mathbb{S}^N(a)$.
%

%%%%%%%
A real number $\lambda$ is called a \emph{closed eigenvalue}, or simply \emph{eigenvalue} of $-\Delta_\mu$ on $M$ 
if there exists a nontrivial solution $\phi\in C^2(M)$ to
\begin{equation}\label{EP}
\Delta_\mu \phi=-\lambda \phi \qquad \text{in $M$}.
\end{equation}
A solution to \eqref{EP} is called an \emph{eigenfunction} of eigenvalue $\lambda$.
Any constant function on $M$ is an eigenfunction of eigenvalue $0$.
We denote the list of distinct eigenvalues on $M$ by
\[
0=\lambda_0(M,\mu)<\lambda_1(M,\mu)<\lambda_2(M,\mu)<\cdots <\lambda_k(M,\mu)<\cdots \uparrow \infty.
\]
%%%
Let $E_{k}(M,\mu)$ be the linear space of solutions to \eqref{EP} for $\lambda=\lambda_k(M,\mu)$.
It is known that the linear space $E_{k}(\mathbb{S}^N(a))$ is spanned 
by the restriction of homogeneous harmonic polynomials on $\mathbb{R}^{N+1}$ of degree $k$ to $\mathbb{S}^N(a)$
(see \cite{Ch}*{Section II.4}).
We denote by $\mathbb{P}(n)$ the linear space of polynomials on $\mathbb{R}^n$.
Define the linear subspace of $E_{k}(\mathbb{S}^N(a))$ by
\begin{align*}
E_{k}^{n}(\mathbb{S}^N(a)):=
\left\{ \Phi\in E_{k}(\mathbb{S}^N(a)) \biggm| 
\begin{tabular}{l}
$Q \circ p^N_n=\Phi$ on $\mathbb{S}^N(a)$ for some $Q\in \mathbb{P}(n)$
\end{tabular}
\right\}.
\end{align*}
%
%%%%%%%%%%%%%%%%%%%%%%%%%%%%%%%
%%%%%%%%%%%%%%%%%%%%%%%%%%%%%%%%%%%%%%%%%%%%%%%
%%%%%%%%%%%%%%%%%%%%
\begin{theorem}\label{main1}
Let $\{a_N\}_{N}$ be a sequence of positive real numbers.
For $n, N\in \mathbb{N}$ with $n,2\leq N$ and $k\in\mathbb{N}_0$, 
\begin{equation}\label{m1}
\dim E_{k}^{n}(\mathbb{S}^N(a_N))=\dim E_{k}(\Gamma^n)=:d_k(n).
\end{equation} 
Moreover, if $\{a_N/\sqrt{N-1}\}_{N}$ converges to a positive real number $\alpha$ as $N\to \infty$, then 
\begin{equation}\label{m2}
\lim_{N \to \infty } \lambda_k(\mathbb{S}^N(a_N))=\lambda_k(\Gamma^n_{\alpha}).
\end{equation}
In this case, there exist 
a set of  homogeneous harmonic polynomials $\{P_{N,j}\}_{j=1}^{d_k(n)}$ on $\mathbb{R}^{N+1}$ of degree $k$ 
and $\{Q_{N,j}\}_{j=1}^{d_k(n)}\subset \mathbb{P}(n)$ satisfying the following three properties$:$
\begin{itemize}
\item
the restriction of  $\{P_{N,j}\}_{j=1}^{d_k(n)}$to $\mathbb{S}^N(a)$ forms a basis of $E_{k}^{n}(\mathbb{S}^N(a_N))$.
\item
$Q_{N,j} \circ p^N_n=P_{N,j}$ on $\mathbb{S}^N(a_N)$.
\item
$\{Q_{N,j}\}_{N}$ converges to some $Q_j\in \mathbb{P}(n)$ uniformly on compact sets and strongly in~$L^2(\Gamma^n_{\alpha})$ as $N\to \infty$ for each $1\leq j\leq d_{k}(n)$
and $\{Q_j\}_{j=1}^{d_k(n)}$ forms a basis of~$E_{k}(\Gamma^n_{\alpha})$.
\end{itemize}
\end{theorem}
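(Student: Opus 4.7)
The plan is to parameterize $E_k^n(\mathbb{S}^N(a_N))$ by its ``$x$-leading part'', track the explicit recursion satisfied by the harmonic extension, and pass to the limit $N\to\infty$ to identify the limit as the Hermite basis of $E_k(\Gamma^n_\alpha)$.

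For \eqref{m1}, any $\Phi\in E_k^n(\mathbb{S}^N(a_N))$ is the restriction of a degree-$k$ homogeneous harmonic polynomial $P$ on $\mathbb{R}^{N+1}$, and the requirement that $P|_{\mathbb{S}^N(a_N)}$ depend only on $x$ forces $P$ to be $SO(N-n+1)$-invariant in the last $N-n+1$ coordinates $y$. Writing $P(x,y)=\sum_{j=0}^{\lfloor k/2\rfloor}f_j(x)\,|y|_2^{2j}$ with $f_j$ homogeneous of degree $k-2j$ on $\mathbb{R}^n$, the equation $\Delta_{\mathbb{R}^{N+1}}P=0$ becomes the recursion
\[
f_{j+1}=-\frac{\Delta_x f_j}{2(j+1)(2j+N-n+1)},
\]
which determines $P$ uniquely from $f_0$. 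This gives a linear bijection from the space of homogeneous polynomials of degree $k$ on $\mathbb{R}^n$ onto $E_k^n(\mathbb{S}^N(a_N))$, so both sides of \eqref{m1} equal $\binom{n+k-1}{k}$ (using that the Hermite polynomials of total degree $k$ form a basis of $E_k(\Gamma^n)$). For \eqref{m2}, the classical formula $\lambda_k(\mathbb{S}^N(a_N))=k(k+N-1)/a_N^2$ together with $\lambda_k(\Gamma^n_\alpha)=k/\alpha^2$ and $a_N^2/(N-1)\to\alpha^2$ yields the convergence immediately.

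For the eigenfunction statement, I would fix a basis $\{f_0^{(j)}\}_{j=1}^{d_k(n)}$ of homogeneous polynomials of degree $k$ on $\mathbb{R}^n$ (concretely, monomials $x^{\mathbf{m}}$ with $|\mathbf{m}|=k$), let $P_{N,j}$ be its harmonic extension from the step above, and define $Q_{N,j}$ so that $Q_{N,j}\circ p^N_n=P_{N,j}$ on $\mathbb{S}^N(a_N)$. Iterating the recursion gives
\[
Q_{N,j}(x)=\sum_{l=0}^{\lfloor k/2\rfloor}\frac{(-1)^l(\Delta_x^l f_0^{(j)})(x)}{2^l\, l!\prod_{i=1}^l(2i+N-n-1)}\,(a_N^2-|x|_2^2)^l.
\]
Since $\prod_{i=1}^l(2i+N-n-1)=N^l(1+o(1))$ and $(a_N^2-|x|_2^2)^l=(N-1)^l\alpha^{2l}(1+o(1))$ uniformly for $x$ in compact sets, each term converges to $(-\alpha^2/2)^l\Delta_x^l f_0^{(j)}(x)/l!$, and summing yields $Q_{N,j}\to Q_j:=\exp(-(\alpha^2/2)\Delta_x)f_0^{(j)}$ uniformly on compact sets (the sum terminating at $l=\lfloor k/2\rfloor$). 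The operator $\exp(-(\alpha^2/2)\Delta_x)$ sends $x^{\mathbf{m}}$ to the scaled probabilists' Hermite polynomial $\prod_i\alpha^{m_i}H_{m_i}(x_i/\alpha)$, which is the standard eigenfunction of $-\Delta_{\gamma^n_\alpha}$ with eigenvalue $|\mathbf{m}|/\alpha^2=k/\alpha^2$, and it is a linear isomorphism on homogeneous polynomials of degree $k$; hence $\{Q_j\}$ is a basis of $E_k(\Gamma^n_\alpha)$. Strong $L^2(\Gamma^n_\alpha)$-convergence then follows since all $Q_{N,j}$ and $Q_j$ are polynomials of degree $\leq k$ whose coefficients converge and Gaussian moments of every order are finite.

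The main obstacle I expect is the bookkeeping in the harmonic-extension step: verifying that the recursion correctly describes \emph{all} $SO(N-n+1)$-invariant harmonic extensions, controlling the asymptotics of $\prod_{i=1}^l(2i+N-n-1)$ against $(a_N^2-|x|_2^2)^l$ uniformly on compact sets, and cleanly matching the resulting limit operator $\exp(-(\alpha^2/2)\Delta_x)$ with the Hermite-polynomial intertwiner so that $\{Q_j\}$ is rigorously identified as a basis of $E_k(\Gamma^n_\alpha)$. The underlying algebra is elementary, but the combinatorics of indices and normalizations is where errors are easy to make.
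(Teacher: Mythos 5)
Your proposal is correct and follows essentially the same route as the paper: the paper's Lemma~\ref{hori} establishes exactly your recursion for the $SO(N-n+1)$-invariant harmonic extension, and the paper's explicit $Q_{N,n,K;a}$ and $Q_{n,K;\alpha}$ coincide with your $Q_{N,j}$ and $Q_j$. The only cosmetic difference is in identifying the limit as an eigenfunction of the Ornstein--Uhlenbeck operator: the paper verifies $\Delta_{\gamma^n_\alpha}Q_{n,K;\alpha}=-(k/\alpha^2)Q_{n,K;\alpha}$ by direct computation, whereas you invoke the standard intertwiner $\exp(-(\alpha^2/2)\Delta)$ sending monomials to scaled Hermite polynomials, which is a clean shortcut to the same conclusion.
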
 
%%%
%%%%%%%%%%%%%%%%%%%%%%%%%%%%%%%%%%%%%%%%%%%%%%%%%%

%%%%%%%%%%%%%%%%%%%%%%%%%%%%%%%%%%%%%%%%%%%%%%%%%%
Next we consider the analogue for the first Dirichlet eigenvalue problem.
For $m,i\in \mathbb{N}$ with $i\leq m$,
let $e_i^m$ denote the $m$-tuple consisting of zeros except for a $1$ in the $i$th spot.
Let $d_{\mathbb{S}^N(a)}$ be the Riemannian distance function on $\mathbb{S}^N(a)$.
We define the open ball $B^N_{a\theta}$ in $\mathbb{S}^N(a)$ and the open half-space $V_{\alpha R}^n$ in $\mathbb{R}^n$ by 
\begin{align*}
B^N_{a\theta}:=
\left\{ z \in \mathbb{S}^N(a) \ |\ d_{\mathbb{S}^N(a)}\left(z, a e_1^{N+1}\right) < a\theta \right\},\qquad
V_{\alpha R}^n:=\{ x=(x_i)_{i=1}^n \in \mathbb{R}^n \ |\ x_1> \alpha R \},
\end{align*}
respectively.
Let $\Omega=B^N_{a\theta}$ if $M=\mathbb{S}^N(a)$, and $\Omega=V_{\alpha R}^n$ if $M=\Gamma^n_{\alpha}$.
%%%

%%%%%%%
A real number $\lambda$ is called the \emph{first Dirichlet eigenvalue} of $-\Delta_\mu$ on $\Omega$
if there exists a solution $\phi \in C^2(\Omega)\cap C^0(\overline{\Omega})$ to
\begin{equation}\label{DEP}
\begin{cases}
\Delta_\mu \phi=-\lambda \phi & \text{in\ $\Omega$},\\
\phi>0 & \text{in\ $\Omega$},\\
\phi=0 & \text{on\ $\partial\Omega$}.
\end{cases}
\end{equation}
The first Dirichlet eigenvalue of $-\Delta_\mu$ on $\Omega$, denoted by $\lambda(\Omega, (M,\mu))$, is positive 
and a solution to~\eqref{DEP} is uniquely determined up to a positive constant multiple.
A solution to~\eqref{DEP} is called a \emph{first positive Dirichlet eigenfunction} of $-\Delta_\mu$ on $\Omega$.
%%%

Let $H^1_0(V_{\alpha R}^n,\gamma^n_{\alpha})$ denote the completion of $C_0^\infty(V_{\alpha R}^n)$ with respect to the inner product given by
\[
(f_1, f_2)_{H^1(V_{\alpha R}^n,\gamma^n_{\alpha})}:=\int_{V_{\alpha R}^n} f_1f_2 d\gamma^n_{\alpha}
+\int_{V_{\alpha R}^n} \langle\nabla_{\mathbb{R}^n} f_1,\nabla_{\mathbb{R}^n} f_2 \rangle d\gamma^n_{\alpha}
\qquad\text{for\ } f_1, f_2\in C^\infty_0(V_{\alpha R}^n).
\]

\begin{theorem}\label{main2}
Let $\{a_N\}_{N}$, $\{\theta_N\}_{N}$ be sequences of real numbers such that 
$a_N>0$ and $\theta_N\in (0,\pi)$ for $N \in \mathbb{N}$.
Define two functions $s_N, w_N$ on $[-a_N, a_N]$ and a function $w_\infty$ on $\mathbb{R}$ by 
\begin{align*}
s_N(r):=1-\frac{r^2}{a_N^2},\quad
w_N(r)
:={s_N(r)^{\frac{N}{2}-1}}\cdot\left({\displaystyle \int_{-a_N}^{a_N} s_N(\rho)^{\frac{N}{2}-1}d\rho}\right)^{-1},\quad
w_\infty(r):= \frac{1}{\sqrt{2\pi}\alpha}e^{-\frac{r^2}{2\alpha^2}},
\end{align*}
respectively.
Let $\phi_N$ be the first positive Dirichlet eigenfunction of $-\Delta_{\mathbb{S}^N(a_N)}$ on $B^N_{a_N \theta_N}$ such that 
\[
\int_{B_{a_N \theta_N}^N} \phi_N(z)^2 d\!\vol_{\mathbb{S}^N(a_N)}(z)=
\vol_{\mathbb{S}^{N-1}(a_N)}(\mathbb{S}^{N-1}(a_N)) \int_{-a_N}^{a_N} s_N(r)^{\frac{N}{2}-1}dr.
\]
Then for $n, N\in \mathbb{N}$ with $n,2\leq N$, 
there exists $\psi_N \in H_0^1(V_{\alpha R}^n, \gamma^n_{\alpha})$ such that
\begin{equation}\label{dproj}
\psi_N \circ p^N_n =\phi_N \cdot \left\{\left(s_{N}\sqrt{\frac{w_N}{w_\infty}}\right) \circ p^N_1\right\}
\qquad
\text{on\ }B^N_{a_N \theta_N}.
\end{equation}
Moreover, if there exist $\alpha>0$ and $R\in \mathbb{R}$ such that 
\begin{align*}%\begin{split} \label{bounded} 
%%%%
\lim_{N\to \infty} \frac{a_N}{\sqrt{N-1}}=\alpha, \qquad\qquad \qquad\qquad
&\lim_{N\to \infty} a_N \cos \theta_N=\alpha R, \\
\sup_{N\in \mathbb{N}}\frac{a_N^2 -\alpha^2(N-2)}{a_N}<\infty,
\ \qquad\qquad&
\ \qquad a_N \cos \theta_N\geq \alpha R,
%\end{split}
\end{align*}
then 
\begin{align*}
%%%%%%
\lim_{N \to \infty} \lambda(B^N_{a_N \theta_N}, \mathbb{S}^N(a_N))=\lambda(V_{\alpha R}^n, \Gamma^n_{\alpha}).
%%%
\end{align*}
In this case, $\{\psi_N\}_{N}$ converges to the first positive Dirichlet eigenfunction $\psi_\infty$ of $-\Delta_{\gamma_\alpha^n}$ on $V_{\alpha R}^n$ 
strongly in $H^1_0(V_{\alpha R}^n, \gamma^n_{\alpha})$ and
\[
\int_{V_{\alpha R}^n} \psi_\infty(x)^2 d\gamma^n_{\alpha}(x)=1.
\]
\end{theorem}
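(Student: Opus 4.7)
The plan is to reduce Theorem~\ref{main2} to a one-dimensional Sturm--Liouville problem and then pass to the limit via a variational and compactness argument. To begin, I would exploit symmetry: since $B^N_{a_N\theta_N}$ is invariant under the rotations of $\mathbb{R}^{N+1}$ fixing $e_1^{N+1}$ and $\phi_N$ is uniquely determined up to a positive scalar, $\phi_N$ depends only on $z_1=p^N_1(z)$. Writing $\phi_N = u_N\circ p^N_1$, a computation in polar coordinates with axis $e_1^{N+1}$ shows that $u_N$ satisfies the Sturm--Liouville equation
\[
(s_N^{N/2}u_N')' + \lambda_N s_N^{N/2-1} u_N = 0 \qquad \text{on}\quad (a_N\cos\theta_N, a_N),\qquad u_N(a_N\cos\theta_N)=0,
\]
where $\lambda_N:=\lambda(B^N_{a_N\theta_N},\mathbb{S}^N(a_N))$. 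Likewise, by invariance of $V^n_{\alpha R}$ under translations orthogonal to $e_1^n$ and the uniqueness of the first positive Dirichlet eigenfunction, $\psi_\infty(x)$ depends only on $x_1$, so the limit problem is a one-dimensional Hermite-type Dirichlet problem on $(\alpha R,\infty)$ with Gaussian weight $w_\infty$.

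For the construction of $\psi_N$, note that the right-hand side of \eqref{dproj} depends only on $z_1 = p^N_1(z)$, so setting
\[
\psi_N(x) := u_N(x_1)\, s_N(x_1)\sqrt{w_N(x_1)/w_\infty(x_1)} \qquad \text{for }x_1\in[a_N\cos\theta_N,a_N]
\]
and $\psi_N(x)=0$ otherwise makes \eqref{dproj} hold, and the resulting function is continuous since $\phi_N$ vanishes at $r = a_N\cos\theta_N$ and $s_N$ vanishes at $r = a_N$. The assumption $a_N\cos\theta_N\geq\alpha R$ places the support of $\psi_N$ in $\overline{V^n_{\alpha R}}$. Fubini's theorem, applied with the slice decomposition of $\vol_{\mathbb{S}^N(a_N)}$ along $p^N_1$, yields
\[
\int_{V^n_{\alpha R}} \psi_N^2\,d\gamma^n_\alpha = \int_{a_N\cos\theta_N}^{a_N} u_N(r)^2 s_N(r)^2 w_N(r)\,dr,
\]
and the analogous identity for Dirichlet energy. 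The normalization prescribed on $\phi_N$ translates exactly into $\int u_N^2 w_N\,dr = 1$, which, combined with a uniform upper bound on $\lambda_N$ (obtained from a fixed smooth test function), gives a uniform $H^1$-bound on $\{\psi_N\}$ and hence $\psi_N\in H^1_0(V^n_{\alpha R},\gamma^n_\alpha)$.

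The eigenvalue asymptotics is then attacked variationally. For the upper bound, I would use as a test function in the Rayleigh quotient for $\lambda_N$ a smooth compactly supported approximant of $\psi_\infty$, pulled back to the sphere via its dependence on $z_1 = x_1$. A one-dimensional form of Poincar\'e's theorem provides $w_N\to w_\infty$, and the hypothesis $\sup_N (a_N^2-\alpha^2(N-2))/a_N<\infty$ forces the drift coefficient $Nr/a_N^2$ (obtained by dividing the Sturm--Liouville equation by $s_N^{N/2-1}$) to converge uniformly on compact intervals to $r/\alpha^2$. Together these yield $\limsup_N \lambda_N\leq \lambda(V^n_{\alpha R},\Gamma^n_\alpha)$.

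The matching lower bound and the strong $H^1$-convergence are the main obstacle. I would extract a subsequence along which $\psi_N\rightharpoonup\psi_*$ weakly in $H^1$ and strongly in $L^2_{\mathrm{loc}}$, and use the ODE for $u_N$ together with the coefficient convergence above to identify $\psi_*$ as a nonnegative $H^1_0(V^n_{\alpha R},\gamma^n_\alpha)$-solution of the one-dimensional Gaussian Dirichlet problem with eigenvalue $\mu:=\liminf_N\lambda_N$. Tightness of the probability measures $u_N(r)^2 w_N(r)\,dr$, coming from the Gaussian tail of $w_N$, together with $s_N\to 1$ on compact sets, gives $\int\psi_N^2\,d\gamma^n_\alpha\to 1$, so $\psi_*\not\equiv 0$. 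Uniqueness of the first positive Dirichlet eigenfunction then yields $\psi_*=\psi_\infty$, $\|\psi_\infty\|_{L^2(\gamma^n_\alpha)}=1$, and $\mu = \lambda(V^n_{\alpha R},\Gamma^n_\alpha)$. Finally, convergence of the $H^1$-norms (deduced from $\lambda_N\to\mu$ via the Fubini formula and the Rayleigh quotient) upgrades weak to strong $H^1$-convergence.
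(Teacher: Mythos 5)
Your overall route matches the paper's: reduce to the one-dimensional Sturm--Liouville problem $s_N h_N'' - (Nr/a_N^2)h_N' = -\lambda_N h_N$ on $I_N=(a_N\cos\theta_N,a_N)$ via the change of variable $r=a_N\cos(\vartheta/a_N)$, normalize so that $\int_{I_N} h_N^2 w_N\,dr=1$, define $\psi_N$ by the stated product, extract weak limits, identify the limit as a weak solution of the Gaussian Dirichlet problem, and use uniqueness of the first positive Dirichlet eigenfunction. Your suggestion to obtain the uniform upper bound on $\lambda_N$ by a fixed compactly supported Rayleigh test function pulled back via $z_1$ is a legitimate (and somewhat more self-contained) alternative to the paper's Lemma~\ref{eigen1}, which instead invokes Friedland--Hayman monotonicity together with domain monotonicity.

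There are, however, two genuine gaps. First, you misattribute the role of the hypothesis $\sup_N(a_N^2-\alpha^2(N-2))/a_N<\infty$. The drift coefficient $N/a_N^2\to 1/\alpha^2$ (hence $Nr/a_N^2\to r/\alpha^2$ uniformly on compacts) already follows from $a_N/\sqrt{N-1}\to\alpha$, without this extra hypothesis. Its actual purpose, established in the paper's Lemma~\ref{unif}, is to guarantee $\sup_N\sup_r w_N(r)/w_\infty(r)<\infty$; this uniform domination is what allows one to pass to the limit in the weak formulation with arbitrary test functions $f\in H^1_0(I,\gamma^1_\alpha)$ (showing $f'\sqrt{s_Nw_N/w_\infty}\mathbbm{1}_{I_N}\to f'$ strongly in $L^2(\gamma^1_\alpha)$), and also appears in several domination estimates. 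Your sketch does not use the hypothesis where it is actually needed, and the step it is needed for is not addressed.

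Second, the claim that tightness of the probability measures $u_N^2 w_N\,dr$ ``comes from the Gaussian tail of $w_N$'' is not a valid argument: $u_N$ is not uniformly bounded a priori, and nothing in the Gaussian decay of $w_N$ alone rules out mass of $u_N^2 w_N$ escaping to $+\infty$. The paper obtains tightness (and, more precisely, the fact that $\int u_N^2(1-s_N^2)w_N\,dr\to 0$) by integrating the ODE against $r u_N w_N$ and applying Young's inequality to get the uniform second-moment bound $\int_{I_N} r^2 u_N^2 w_N\,dr\le (2a_N^2/N)(1+2a_N^2\lambda_N/N)$, from which both tightness and the mass-preservation identity $\int\psi_\infty^2\,d\gamma^n_\alpha=1$ follow. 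This integration-by-parts step is the crux of the nontriviality of the limit and is missing from your outline; without it the argument that $\psi_*\not\equiv0$ (and that the $L^2$-norms converge, which is also needed to upgrade weak to strong $H^1$-convergence) does not go through.
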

%%%%%%%%%%%%%%%%%%%%%%%%%%%%%%%%%%%%%%%

Let us make a few comments on related works.
Aside from the difference between the asymptotic behavior of the spectral structure on $S_N$ and $\Gamma^{N+1}$,
the study of the relation between the limit of $S_N$ as $N\to\infty$ and  the infinite-dimensional Gaussian space has a long history, 
which  goes back to  Boltzmann and  Maxwell around the 1860s in the  study of the motion of gas molecules.
McKean~\cite{Mc} gave an exposition to explain how this study is fruitful
(see also~\cite{Hidabook}, where the classical idea of  L\'evy~\cite{Le} and Wiener~\cite{Wi} is explained with examples in physics and control theory).
Its mathematical foundations are established in the 1960s.
For example, Hida--Nomoto~\cite{HN} constructed an infinite-dimensional Gaussian space as the projective limit space of $S_N$
and defined a  family of functions analogous to homogeneous harmonic polynomials restricted to $S_N$, which  forms a complete orthonormal system in the $L^2$-spaces on the  infinite-dimensional Gaussian space. 
Umemura--K\^ono~\cite{UK}*{Section~4} make clear the relation between the Laplacian on $S_N$ and that on the infinite-dimensional Gaussian space
and  investigated how this relation reflects on their eigenfunctions.
Peterson--Sengupta~\cite{PS}*{Section~5} analyzed an asymptotic behavior of the Laplacian on $S_N$ and its eigenfunctions  from the algebraic viewpoint.
Compare Theorem~\ref{main1} with \cite{UK}*{Proposition~5} and \cite{PS}*{Proposition~4.3}.
Note that the difference of eigenvalues on $S_N$ and $\Gamma^1$ provides an quantitative estimate of  the difference between $S_N$ and $\Gamma^1$ by~\cite{BF}*{Theorem~1.2}.

As for the Dirichlet eigenvalue problem,
Friedland and Hayman~\cite{FH}*{Theorem~2} proved that the positive root $\nu_N(s)$ of the equation
\[
\nu(\nu+N-1)
=
\lambda(B^N_{\theta_N}, \mathbb{S}^N(1))
\qquad
\text{with}\quad
\vol_{\mathbb{S}^N(1)}(B^N_{\theta_N})/\vol_{\mathbb{S}^N(1)}(\mathbb{S}^N(1))=s\in (0,1)
\]
is nonincreasing in $N\in \mathbb{N}$ hence the limit of $\{\nu_N(s)\}_{N}$ as $N\to \infty$ exists. 
This suggests that $\{\lambda(B^N_{\theta_N}, \mathbb{S}^N(1))/N\}_{N}$ converges to $\lambda(V_{R}^1, \Gamma^1)$ as 
$N\to \infty$
(see~\cite{CS}*{p.218}).
In general, the spectral convergence with respect to the pointed measured Gromov--Hausdorff topology
under the curvature-dimension condition is known (for instance, see \cite{AH2, AH, GMS, ZZ} and the references therein).
With respect to the pointed measured Gromov--Hausdorff topology,
although $\{\mathbb{S}^N(1)\}_{{N}}$ diverges (see~\cite{Funano}*{Proposition~1.1}),
$\{(p^N_n(S_N), |\cdot|_2, (p^N_n|_{S_N})_\sharp \sigma_{N})\}_{{N}}$ converges to $\Gamma^n$ as \text{$N \to \infty$}. 
This with the metric contraction principle (see~\cite{Mil2018}*{Proposition~3.4}) suggests
\[
\lambda_m(V_{\alpha R}^n, \Gamma^n_{\alpha})
\geq
\lim_{N \to \infty} \lambda_m(B^N_{a_N \theta_N}, \mathbb{S}^N(a_N)),
\]
where $m\in \mathbb{N}$ and $\lambda_m(\Omega, (M,\mu))$ stands for the $m$th Dirichlet eigenvalue of $-\Delta_\mu$ on $\Omega$.
In the case $n=1$, 
Kazukawa~\cite{Kazukawa}*{Example~4.18} used a projection from $S_N$ to $\mathbb{R}$ different from $p^N_1$
and discussed the spectral convergence on $S_N$ in the framework of metric measure foliation.
%%%

%%%%
This paper is organized as follows. 
Section 2 is devoted to recalling some known facts of Eigenvalue problems on spheres and Gaussian spaces.
We prove Theorem \ref{main1} in Section~3 and Theorem \ref{main2} in Section~4, respectively. 
We discuss the relation between Dirichlet eigenspaces of high-dimensional spheres and those of Gaussian spaces in Section~5.
%%%%%%%%%%%%%%%%%%%%%%%%%%%%%%%%%%%%%%%%%%%%%%%%%%%%%%%%%%%%

%%%%%%%%%%%%%%%%%%%%%%%%%%%%
\section{Eigenvalue problems on $\mathbb{S}^N(a)$ and $\Gamma^n_{\alpha}$}\label{eigen}
%%%
Let us briefly recall some known facts of eigenvalue problems on $\mathbb{S}^N(a)$ and $\Gamma^n_{\alpha}$.
We refer to \cite{Ch}*{Sections II.4, II.5} and \cite{Mil2018}*{Sections 2.1, 2.2} for more details.

%%%%%%%%%%%%%%%%%%%%%%%%%%%%
%%%%%%%%%%%%%%%%%%%%%%%%%%%%
%%%%%%%%%%%%%%%%%%%%%%%%%%%%%%%%%%%%%%%%%%%%%%%%%%%%%%%%%%%%
%%%%%%%%%%%%%%%%%%%%%%%%%%%%%%%%%%%%%%%%%%%%%%%%%%%%%%%%%%%%
%%
\subsection{Eigenvalue problem on $\mathbb{S}^N(a)$}\label{ES}
The $k$th distinct eigenvalue on $\mathbb{S}^N(a)$ is given by
\begin{equation}\label{sphere}
\lambda_k(\mathbb{S}^N(a))=\frac{k}{a^2}(k+N-1)
\qquad\text{with multiplicity}\qquad
\binom{N+k}{k}-\binom{N+k-2}{k-2},
\end{equation}
where we adhere to the convention that $\binom{N-2}{-2}, \binom{N-1}{-1}:=0$.
%
%%%%
%

For a first positive Dirichlet eigenfunction $\phi$ of $-\Delta_{\mathbb{S}^N(a)}$ on $B^N_{a\theta}$, 
there exists a solution $\varphi \in C^\infty([0, a\theta)) \cap C([0,a\theta])$ to
\[
\tag{D$^N$}
\begin{cases}
\displaystyle
\varphi''(\vartheta)+ (N-1)\frac{\cos \left(\vartheta/{a}\right)}{a\sin\left(\vartheta/{a}\right)} \varphi'(\vartheta)=-\lambda(B^N_{a\theta}, \mathbb{S}^N(a)) \varphi(\vartheta)
& \text{in\ $\vartheta\in [0,a\theta)$},\\
\varphi(\vartheta)>0& \text{in\ $\vartheta\in [0,a\theta)$},\\
\varphi(a\theta)=0, &
\end{cases} 
\]
such that $\phi(z)=\varphi( d_{\mathbb{S}^N(a)}(z,a e_1^{N+1}))$ on $\overline{B^N_{a\theta}}$,
where 
\begin{equation}\label{int}
\int_{B^N_{a\theta}} \phi(z)^2 d \vol_{\mathbb{S}^N(a)}(z)
=
\vol_{\mathbb{S}^{N-1}(a)}(\mathbb{S}^{N-1}(a)) \int_{0}^{a\theta} \varphi(\vartheta)^2
 \sin^{N-1}\left({\vartheta}/{a}\right)d\vartheta<\infty
\end{equation}
holds.
%%%%%%%%
It follows that 
\[
\lambda(B^N_{a\theta}, \mathbb{S}^N(a))=\frac{1}{a^2}\lambda(B^N_{\theta}, \mathbb{S}^N(1)).
\]
It is known that $\phi(z)=\cos ( d_{\mathbb{S}^N(a)}(z,a e_1^{N+1})/a )$ is a first positive Dirichlet eigenfunction 
of $-\Delta_{\mathbb{S}^N(a)}$ on $B^N_{a\pi/2}$ and $\lambda(B^N_{a\pi/2}, \mathbb{S}^N(a))=N/a^2$.
Hence $\varphi(r)=\cos(r/a)$ solves (D$^N$) with the case $\theta=\pi/2$.
Notice that
\[
d_{\mathbb{S}^N(a)}(z,a e_1^{N+1})=a \cdot \arccos\left(\frac{z_1}{a}\right)
\qquad
\text{on\ }z=(z_i)_{i=1}^{N+1}\in \mathbb{S}^N(a)\subset \mathbb{R}^{N+1}.
\]
%%

%%%%%%%%%%%%%%%%%%%%%%%%%%%%%%%%%%%%%%%%%%%%%%%%%%%%%%%%%%%%
%%%%%%%%%%%%%%%%%%%%%%%%%%%%%%
\subsection{Eigenvalue problem on $\Gamma^n_{\alpha}$}\label{EG}
%%%%%%%%%%%%%%%%%%%%%%%%%%%%%%
%%%%%%%%%%%%%%%%%%%%%%%%%%%%%%%%%%%%%%%%%%%%%%%%%%%%%%%%%%%%
The weighted Laplacian $\Delta_{\gamma^n_{\alpha}}$ is also called the \emph{Ornstein--Uhlenbeck operator} and is given by 
\[
\Delta_{\gamma^n_{\alpha}} f(x)= \Delta_{\mathbb{R}^n} f(x)- \frac{1}{\alpha^2}\langle x,\nabla_{\mathbb{R}^n} f(x)\rangle
\qquad\text{for\ }f\in C^2(\mathbb{R}^n) \text{\ and\ } x\in \mathbb{R}^n.
\]
For $K=(K_i)_{i=1}^n\in \mathbb{N}_0^n$ and $k\in \mathbb{N}_0$, set
\[
|K|:=\sum_{i=1}^{n} K_i, \qquad
\mathbb{N}_0^n(k):=\left\{ K \in \mathbb{N}^n_0 \ |\ |K|=k\right\}.
\]
The $k$th distinct eigenvalue on $\Gamma^n_{\alpha}$ is given by
\begin{equation}\label{gaussian}
\lambda_k(\Gamma^n_{\alpha})=\frac{k}{\alpha^2}
%%%%
\qquad \text{with multiplicity}\qquad
d_k(n):=\sharp \mathbb{N}_0^n(k)=\binom{n-1+k}{k}
\end{equation}
and $E_k(\Gamma^n_{\alpha})$ is spanned by 
\[
\left\{ x=(x_i)_{i=1}^n\mapsto \prod_{i=i}^n H_{K_i}(\alpha^{-1}x_i)\right\}_{K\in \mathbb{N}_0^n(k)},
\]
where $H_k$ is the $k$th order Hermite polynomial of the form
\begin{equation}\label{hermite}
H_k(r):=(-1)^k e^{\frac{r^2}{2}} \frac{d^k}{dr^k} e^{-\frac{r^2}{2}}.
\end{equation}

An argument similar to the first Dirichlet eigenvalue problem on a ball in a sphere implies that, 
for a first Dirichlet eigenfunction $\psi$ of $-\Delta_{\gamma^n_\alpha}$ on $V_{\alpha R}^n$,
there exists a first Dirichlet eigenfunction~$h$ of $-\Delta_{\gamma^1_\alpha}$ on $V_{\alpha R}^1=(\alpha R, \infty)$ such that 
$\psi(x)=h(x_1)$ on $x=(x_i)_{i=1}^n \in \overline{V_{\alpha R}^n}$, where 
\[
\int_{V_{\alpha R}^n} \psi(x)^2 d\gamma^n_{\alpha}(x)
=
\int_{\alpha R}^{\infty} h(r)^2 d\gamma_{\alpha}^1(r)<\infty.
\]
Moreover, $\lambda(V_{\alpha R}^1, \Gamma_{\alpha}^1)=\lambda(V_{\alpha R}^n, \Gamma^n_{\alpha})$ holds.
%%%%

%%%%%%%%%%%%%%%%%%%%%%%%%%%%%%%%%%%%%%%%%%%%%%%%%%%%
%%%%%%%%%%%%%%%%%%%%%%%%%%
\section{Proof of Theorem~\ref{main1}}
%%%%%%%%%%%%%%%%%%%%%%%%%%%%
%%%
To prove Theorem~\ref{main1}, 
we analyze the composition of $p^N_n$ and homogeneous harmonic polynomials on $\mathbb{R}^{N+1}$.
Given $j\in \mathbb{N}$ and $m\in \mathbb{N}_0$, set
\[
\Delta^j_{\mathbb{R}^n}:=\left( \sum_{i=1}^n \frac{\partial^{2}}{\partial x_i^{2}}\right)^j,\qquad
c_j(m):=-\frac{1}{2j(m+2j-1)},\qquad
C_j(m):=\prod_{l=1}^{j} c_{l}(m) ,
\]
and $\Delta^0_{\mathbb{R}^n}:=\mathrm{id}_{\mathbb{R}^n}, C_0(m):=1$.
For $K=(K_i)_{i=1}^n\in \mathbb{N}_0^n$ and $x=(x_i)_{i=1}^n \in \mathbb{R}^{n}$, 
set
\[
x^K:=\prod_{i=1}^{n} x_i^{K_i},
\]
where by convention $0^0:=1$.
For $t\in \mathbb{R}$, let $[t]$ be the greatest integer less than or equal to~$t$.
\begin{definition}
For $n, N\in \mathbb{N}$ with $n\leq N$, $K\in \mathbb{N}^n_0$ and $a, \alpha>0$, 
define 
\begin{align*}
%%%%
P_{N,n,K}(x,y)&:=\sum_{j=0}^{[|K|/2]} C_j(N-n) |y|_2^{2j} \Delta^{j}_{\mathbb{R}^n} x^K 
&&\text{for\ } (x,y) \in \mathbb{R}^{n} \times \mathbb{R}^{N-n+1},\\
%%%
Q_{N,n,K;a}(x)&:=\sum_{j=0}^{[|K|/2]}C_j(N-n) (a^2-|x|_2^2)^j \Delta^{j}_{\mathbb{R}^n} x^K 
&& \text{for\ } x\in \mathbb{R}^{n},\\
%%%%
Q_{n,K;\alpha}(x)&:=\sum_{j=0}^{[k/2]}(-1)^j \frac{\alpha^{2j}}{2^j j!}\Delta^{j}_{\mathbb{R}^n} x^K
&& \text{for\ } x\in \mathbb{R}^{n}.
\end{align*}
\end{definition}
We easily check that $P_{N,n,K}$ is a homogeneous polynomial on $\mathbb{R}^{N+1}$of degree $|K|$ and $Q_{N,n,K;a},Q_{n,K;\alpha}\in \mathbb{P}(n)$.
All of $\{ P_{N,n,K}\}_{K\in \mathbb{N}^n_0(k)}, \{ Q_{N,n,K;a}\}_{K\in \mathbb{N}^n_0(k)},\{ Q_{n,K;\alpha}\}_{K\in \mathbb{N}^n_0(k)}$ are linearly independent.
It turns out that 
\[
Q_{N,n,K;a}\circ p^N_n =P_{N,n,K}
\qquad
\text{on $\mathbb{S}^N(a)$}.
\]
%%%%%%
%%%%%%%%%%%%%%%%%%%%%%%%%%
\begin{lemma}\label{hori}
For $n,N\in \mathbb{N}$ with $n,2\leq N$ and $k\in \mathbb{N}_0$, let $P$ be a homogeneous harmonic polynomial on $\mathbb{R}^{N+1}$ of degree $k$.
Then $P|_{\mathbb{S}^N(a)}\in E_{k}^{n}(\mathbb{S}^N(a))$ 
if and only if there exists $b_K \in \mathbb{R}$ for each $K\in \mathbb{N}_0^n(k)$ such that 
$P$ is decomposed as 
\[
P
=\sum_{K\in \mathbb{N}_0^{n}(k)} b_K P_{N,n,K}
\qquad
\text{on\ }\mathbb{R}^{N+1}.
\]
\end{lemma}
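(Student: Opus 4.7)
My plan is to prove the two implications separately, with the central tool being the symmetry of $\mathbb{R}^{N+1}=\mathbb{R}^n\times\mathbb{R}^{N-n+1}$ under the $O(N-n+1)$-action that rotates only the $y$-coordinates. \emph{Sufficiency} is immediate: if $P$ is homogeneous harmonic of degree $k$ and $P=\sum_{K\in\mathbb{N}_0^n(k)} b_K P_{N,n,K}$, then the identity $Q_{N,n,K;a}\circ p^N_n=P_{N,n,K}$ on $\mathbb{S}^N(a)$ recorded before the lemma gives
\[
P|_{\mathbb{S}^N(a)}=\Bigl(\textstyle\sum_{K\in\mathbb{N}_0^n(k)} b_K Q_{N,n,K;a}\Bigr)\circ p^N_n\Big|_{\mathbb{S}^N(a)},
\]
with $\sum_K b_K Q_{N,n,K;a}\in\mathbb{P}(n)$. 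Combined with the standard fact that the restriction to $\mathbb{S}^N(a)$ of a homogeneous harmonic polynomial of degree $k$ lies in $E_k(\mathbb{S}^N(a))$, this places $P|_{\mathbb{S}^N(a)}$ in $E_k^n(\mathbb{S}^N(a))$.

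For \emph{necessity}, suppose $P|_{\mathbb{S}^N(a)}=Q\circ p^N_n|_{\mathbb{S}^N(a)}$ for some $Q\in\mathbb{P}(n)$. For each $g\in O(N-n+1)$, define $(g\cdot P)(x,y):=P(x,g^{-1}y)$; since $p^N_n$ is unchanged by the $g$-action, the polynomial $g\cdot P-P$ is homogeneous of degree $k$ and vanishes on $\mathbb{S}^N(a)$, hence vanishes on all of $\mathbb{R}^{N+1}$ by scaling along rays through the origin. Therefore $P$ is $O(N-n+1)$-invariant in $y$, and by the standard description of $O(m)$-invariant polynomials in a vector variable it admits an expansion
\[
P(x,y)=\sum_{j=0}^{[k/2]} f_j(x)\,|y|_2^{2j},\qquad f_j\in\mathbb{P}(n)\text{ homogeneous of degree }k-2j.
\]

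The remaining step is to pin down the $f_j$ from the harmonicity of $P$. Writing $\Delta_{\mathbb{R}^{N+1}}=\Delta_{\mathbb{R}^n}+\Delta_{\mathbb{R}^{N-n+1}}$ and using $\Delta_{\mathbb{R}^{N-n+1}}|y|_2^{2j}=2j(2j+N-n-1)|y|_2^{2j-2}$, the equation $\Delta_{\mathbb{R}^{N+1}}P=0$ separates into the recursion $f_{j+1}=c_{j+1}(N-n)\Delta_{\mathbb{R}^n}f_j$, so that $f_j=C_j(N-n)\Delta^j_{\mathbb{R}^n} f_0$ for all $j$. Expanding $f_0=\sum_{K\in\mathbb{N}_0^n(k)} b_K x^K$ then yields $P=\sum_K b_K P_{N,n,K}$, as required. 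The main obstacle is the $O(N-n+1)$-invariance step: one must confirm that the restriction identity together with homogeneity genuinely forces $g$-invariance on all of $\mathbb{R}^{N+1}$. After that, the harmonicity recursion is a mechanical verification calibrated exactly to the constants $c_j(N-n)$ and $C_j(N-n)$ in the definition of $P_{N,n,K}$ (and as a by-product it confirms that each $P_{N,n,K}$ is itself harmonic).
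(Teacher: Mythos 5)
Your proof is correct and follows essentially the same route as the paper: both reduce to a decomposition $P(x,y)=\sum_{j}f_j(x)\,|y|_2^{2j}$ via symmetry in the $y$-variable, and then use harmonicity to obtain the recursion $f_j=c_j(N-n)\Delta_{\mathbb{R}^n}f_{j-1}$, which pins down $P$ as a combination of the $P_{N,n,K}$. The only stylistic difference is in the symmetry step: the paper records the two pointwise identities $P(x,y)=P(x,|y|_2 e_1^{N-n+1})=P(x,-|y|_2 e_1^{N-n+1})$ on $\mathbb{S}^N(a)$ and defers to a cited proposition of Peterson--Sengupta for the resulting polynomial decomposition, whereas you make the full $O(N-n+1)$-invariance explicit by the ray-scaling argument (a homogeneous polynomial vanishing on a centered sphere vanishes identically), which is a clean, self-contained justification of the same reduction.
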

%%%%%%%%%%%%%%%%%%%%%%%%%%
%%%%%%%%%%%%%%%%%%%%%%%%%%%%%%%%%%%%%%%%%%%%%%%%%%%%
%%%%%%%%%%%%%%%%%%%%%%%%%%%%%%%%%%%%%%%%%%%%%%%%%%%%
\begin{proof}
Let $P$ be a homogeneous harmonic polynomial on $\mathbb{R}^{N+1}$ of degree $k$.

If $P|_{\mathbb{S}^N(a)}\in E_{k}^{n}(\mathbb{S}^N(a))$, then $P$ satisfies
\[
P(x,y)=P(x, |y|_2e_1^{N-n+1})=P(x, -|y|_2e_1^{N-n+1})\qquad \text{for}\ \ (x,y)\in \mathbb{S}^N(a)\subset \mathbb{R}^n \times \mathbb{R}^{N-n+1}.
\]
This implies that
there exists a homogeneous polynomial $Q_{k-2j}$ on $\mathbb{R}^n$ of degree $k-2j$ for each $0\leq j\leq [k/2]$ such that 
\[
P(x,y)=\sum_{j=0}^{[k/2]} |y|_2^{2j} Q_{k-2j}(x) \qquad \text{for\ } (x,y) \in \mathbb{R}^n \times \mathbb{R}^{N-n+1}
\]
%%%%%%%%%%%%%%%%%%%%%%%%%%%%%%%%%%%%%%%%%%%%%%%%%%%%
{(compare with \cite{PS}*{Proposition~3.10}).}
Since $P$ is harmonic, we find that 
\begin{align}
\begin{split}\label{harmonic}
0&=\Delta_{\mathbb{R}^{N+1}} P(x,y)
=\sum_{j=0}^{[k/2]} \left\{ \left( \Delta_{\mathbb{R}^{N-n+1}}|y|_2^{2j}\right) Q_{k-2j}(x)+ |y|_2^{2j} \Delta_{\mathbb{R}^{n}} Q_{k-2j}(x) \right\}\\
%%%
&=\sum_{j=0}^{[k/2]} \left\{ 2j (N-n+2j-1)|y|_2^{2(j-1)} Q_{k-2j}(x) +|y|_2^{2j} \Delta_{\mathbb{R}^{n}} Q_{k-2j}(x) \right\}\\
%%%
&=\sum_{j=1}^{[k/2]}\left\{ \Delta_{\mathbb{R}^{n}} Q_{k-2(j-1)}(x)-\frac{1}{c_j(N-n)} Q_{k-2j}(x)\right\} |y|_2^{2(j-1)} +|y|_2^{2[k/2]} \Delta_{\mathbb{R}^{n}} Q_{k-2[k/2]}(x) \\
&=\sum_{j=1}^{[k/2]}\left\{ \Delta_{\mathbb{R}^{n}} Q_{k-2(j-1)}(x)-\frac{1}{c_j(N-n)} Q_{k-2j}(x) \right\} |y|_2^{2(j-1)},
\end{split}
\end{align}
which implies that
\begin{align*}
Q_{k-2j}(x)
=c_{j}(N-n) \Delta_{\mathbb{R}^{n}} Q_{k-2(j-1)}(x)
=\cdots =C_{j}(N-n) \Delta_{\mathbb{R}^{n}}^{j} Q_{k}(x)
\qquad\text{for\ } 1\leq j \leq [k/2].
\end{align*}
Thus there exists $b_K \in \mathbb{R}$ for each $K\in \mathbb{N}_0^n(k)$ such that 
\[
P(x,y)=
\sum_{j=0}^{[k/2]}C_j(N-n) |y|_2^{2j} \Delta^{j}_{\mathbb{R}^n} \left(\sum_{K\in \mathbb{N}_0^{n}(k)} b_K x^K\right)
=\sum_{K\in \mathbb{N}_0^{n}(k)} b_K P_{N,n,K}(x,y).
\]
Conversely, we observe from~\eqref{harmonic} that $P_{N,n,K}$ is harmonic for each $K\in \mathbb{N}_0^n(k)$.
This complete the proof of the lemma.
\end{proof}
%%%%%%%%%%%%
%%%%%%%%%%%%%%%%%%%%%%%%%%
%%%%%%%%%%%%%%%%%%%%%%%%%%%%%%%%%%%%%%%%%%%%%%%%%%%%
\begin{proof}[Proof of Theorem~\rm{\ref{main1}}]
%%%%%%%%%%%%%%%%%%%%%%%%%%
The relation \eqref{m1} follows from Lemma~\ref{hori}
and \eqref{m2} follows from~\eqref{sphere} together with \eqref{gaussian}, respectively.
%

%%%%
Fix $K\in \mathbb{N}_0^n(k)$.
We prove that 
$\{Q_{N,n,K;a_N}\}_{N}$ converges to $Q_{n,K;\alpha}$ uniformly  on compact sets and strongly in $L^2(\Gamma^n_{\alpha})$ as $N\to \infty$ 
together with $Q_{n,K;\alpha}\in E_{k}(\Gamma^n_{\alpha}) $.
%%%
For $0\leq j\leq [k/2]$, define $q_{N,j}\in \mathbb{P}(n)$ and $q_j\in \mathbb{R}$ by
\[
q_{N,j}(x):=C_j(N-n)(a_N^2-|x|_2^2)^j, \qquad
q_{j}:=(-1)^j \frac{\alpha^{2j}}{2^j j!},
\]
respectively.
Then 
\[
Q_{N,n,K;a_N}(x)=\sum_{j=0}^{[k/2]} q_{N,j}(x)\Delta^{j}_{\mathbb{R}^n} x^K,
\qquad
Q_{n,K;\alpha}(x)=\sum_{j=0}^{[k/2]}q_{j}\Delta^{j}_{\mathbb{R}^n} x^K
\qquad\text{for\ }x\in \mathbb{R}^n. 
\]
Notice that 
$q_{N,0}\equiv 1$ on $\mathbb{R}^n$ and $q_{0}=1$.
For $1 \leq j\leq [k/2] $ and $x\in \mathbb{R}^n$, 
we see that 
\[
q_{N,j}(x) =(-1)^j \prod_{l=1}^j \frac{ a_N^2-|x|_2^2}{2l(N-n+2l-1)}
\xrightarrow{N\to \infty}
 (-1)^j \prod_{l=1}^j \frac{ \alpha^2}{2l} =q_{j}.
\]
Moreover, $\{q_{N,j}\}_{N}$ converges to $q_{j}$ uniformly on compact sets as $N\to \infty$, 
which implies that $\{Q_{N,n,K;a_N}\}_{N}$ converges to $Q_{n,K;\alpha}$ uniformly on compact sets as  $N\to \infty$.
%%%%%
We see that $\{q_{N,j}\}_{N}$ is dominated by $\alpha^{2j} (1+|x|_2^2)^j$ 
hence $\{Q_{N,n,K;a_N}\}_{N}$ is dominated by a certain polynomial on~$\mathbb{R}^n$.
%%%
Since any polynomials on $\mathbb{R}^n$ belongs to $L^2(\Gamma^n_{\alpha})$,
the dominated convergence theorem implies that 
$\{Q_{N,n,K;a_N}\}_{N}$ converges to $Q_{n,K;\alpha}$ strongly in $L^2(\Gamma^n_{\alpha})$ as~$N\to \infty$.
%%%%%

%%% 
%
A direct computation gives
\begin{align*}
\Delta_{\gamma^n_{\alpha}}Q_{n,K;\alpha}(x)
&=
\Delta_{\mathbb{R}^n}Q_{n,K;\alpha}(x)- \frac{1}{\alpha^2}\langle x, \nabla_{\mathbb{R}^n} Q_{n,K;\alpha}(x) \rangle\\
%%%%%
&=
\sum_{j=0}^{[k/2]}q_j\Delta^{j+1}_{\mathbb{R}^n} x^K
-
 \sum_{j=0}^{[k/2]}\frac{q_j}{\alpha^2} \langle x, \nabla_{\mathbb{R}^n}\Delta^{j}_{\mathbb{R}^n} x^K\rangle.
%%%%
\end{align*}
We find that $\Delta^{[k/2]+1}_{\mathbb{R}^n} x^K=0$.
Since 
$\Delta^{j}_{\mathbb{R}^n} x^K$ is a linear combination of $\{ x^{J}\}_{J\in \mathbb{N}_0^n(k-2j)}$
and $\langle x, \nabla_{\mathbb{R}^n} x^J \rangle=|J|x^J$ holds for $J\in \mathbb{N}_0^n$,
%%%
it turns out that 
\[
\langle x, \nabla_{\mathbb{R}^n}\Delta^{j}_{\mathbb{R}^n} x^K\rangle
=(k-2j) \Delta^{j}_{\mathbb{R}^n} x^K,
\]
and consequently 
\begin{align*}
\Delta_{\gamma^n_{\alpha}}Q_{n,K;\alpha}(x)
&=
\sum_{j=0}^{[k/2]-1}q_{j}\Delta^{j+1}_{\mathbb{R}^n} x^K
-
\sum_{j=0}^{[k/2]}\frac{q_j}{\alpha^2}(k-2j)\Delta^{j}_{\mathbb{R}^n} x^K\\
&=
\sum_{j=1}^{[k/2]}\left\{ q_{j-1} -\frac{q_j}{\alpha^2}(k-2j) \right\}\Delta^{j}_{\mathbb{R}^n} x^K\
-\frac{q_0k}{\alpha^2}\Delta^{0}_{\mathbb{R}^n} x^K\\
%%%%%%%
&=
-\sum_{j=1}^{[k/2]}\frac{q_jk}{\alpha^2}\Delta^{j}_{\mathbb{R}^n} x^K\
-\frac{q_0k}{\alpha^2}\Delta^{0}_{\mathbb{R}^n} x^K
%%%%%%%
=-\frac{k}{\alpha^2}Q_{n,K;\alpha}(x).
%%%%
\end{align*}
Thus $Q_{n,K;\alpha}\in E_{k}(\Gamma^n_{\alpha})$ and the proof is complete.
\end{proof}

\begin{remark}
Notice that $\{Q_{N,n,K;a_N}\}_{N}$ does not converge to $Q_{n,K;\alpha}$ uniformly on $\mathbb{R}^n$.
Indeed, if we take $n=1, k=2, I=2$ and $a_N=N^{1/2}$, then 
\begin{align*}
&Q_{N,1,2; \sqrt{N}}(x)=x^2-\frac{N -x^2}{N},
%%%
&&Q_{1,2; 1}(x)=x^2-1 ,
%%%%
%%%%
&& \sup_{x\in \mathbb{R}} \left| Q_{N,1,2; \sqrt{N}}(x)-Q_{1,2; 1}(x)\right|
=\infty.
\end{align*}
%%%%
\end{remark}
%%%%%%%%%%%%%%%%%%%%%%%%%%%%%%%%%%%%%%%%%%%%%%%%
For $(M,\mu)=(\mathbb{S}^N(a), \vol_{\mathbb{S}^N(a)})$ and $\Gamma^n_\alpha$,
it is well-known that all eigenfunctions of $-\Delta_{\mu}$ on~$M$ forms an orthogonal system in $L^2(M,\mu)$.
We denote by $(\cdot, \cdot)_{L^2(M,\mu)}$ and $\|\cdot\|_{L^2(M,\mu)}$ 
the $L^2$-inner product and $L^2$-norm on $(M,\mu)$, respectively.
Let $E^{n}(\mathbb{S}^N(a))$ be the direct sum of $E_{k}^{n}(\mathbb{S}^N(a))$ over $k\in \mathbb{N}_0$
and $E^{n}(\mathbb{S}^N(a))^{\perp}$ its orthogonal complement in $L^2(\mathbb{S}^N(a))$.
The linear space $E^{n}(\mathbb{S}^N(a))$ is spanned by
\[
\left\{P_{N,n,K}|_{\mathbb{S}^N(a)}\right\}_{K\in \mathbb{N}_0^n}.
\]

Set $D^n_a:=\{ x\in \mathbb{R}^n \ |\ |x|_2< a \}$.
%%%
We denote by $\mathbbm{1}_{A}$ the indicator function of a set $A$.
\begin{definition}
Let $\{a_N\}_{N}$ be a sequence of positive real numbers 
such that $\{a_N/\sqrt{N-1}\}_{N}$ converges to a positive real number $\alpha$ as $N\to \infty$.
We define a function $\omega_{a_N,\alpha}$ on $\mathbb{R}^n$ by 
\[
\omega_{a_N,\alpha}(x):=\left(1-\frac{|x|_2^2}{a_N^2}\right)^{\frac{N-n-1}{2}} (2\pi \alpha^2)^{\frac{n}{2}}e^{\frac{|x|^2}{2\alpha^2}} \mathbbm{1}_{D^n_{a_N}}(x).
\]
For $F_N \in E^{n}(\mathbb{S}^N(a_N))$, define a function $f_N$ on $\mathbb{R}^n$ by
\[
f_N(x):=F_N\left(x, \sqrt{a_N^2-|x|_2^2}e_1^{N-n+1} \right) \sqrt{\omega_{a_N,\alpha}}.
\]
We call $f_N$ the \emph{horizontal part} of $F_N$.
\end{definition}
%%%%
It is easy to see that the horizontal part of $P_{N,n,K}|_{\mathbb{S}^N(a_N)}$ is $Q_{N,n,K;a_N} \sqrt{\omega_{a_N,\alpha}}$.
\begin{remark}
For $F_N \in E^{n}(\mathbb{S}^N(a_N))$, put $f(x):=F_N(x, \sqrt{a_N^2-|x|_2^2}e_1^{N-n+1})$ for $x\in D^n_{a_N}$.
Then $f \circ p^N_n=F_N$ holds on $\mathbb{S}^N(a_N)$.
However the converse does not hold, that is, there exist $F\in L^2(\mathbb{S}^N(a))$ and a function $f$ on~$\mathbb{R}^n$ 
such that $f \circ p^N_n=F$ on $\mathbb{S}^N(a_n)$ but $F\notin E^{n}(\mathbb{S}^N(a))$.
%%%
Indeed, for $n=1, N=2$ and $a=\alpha=1$, let
\[
F(z):=z_1^2|_{\mathbb{S}^2(1)}, \qquad f(x):=x^2, \qquad P(z):=z_1^2-z_2^2.
\]
Then $f \circ p^2_1= F$ on $\mathbb{S}^2(1)$ and $P|_{\mathbb{S}^2(1)}\in E_2(\mathbb{S}^2(1))$.
By $E^{1}_2(\mathbb{S}^2(1))=\{ s(3z_1^2-|z|_2^2) \ |\ s\in \mathbb{R}\}$,
$P|_{\mathbb{S}^2(1)}\in E^{1}(\mathbb{S}^2(1))^{\perp}$ follows.
Thus if $F\in E^{1}(\mathbb{S}^2(1))$, then $( F, P|_{\mathbb{S}^2(1)})_{L^2(\mathbb{S}^2(1))}$ should vanish.
However, we compute
\begin{align*}
( F, P|_{\mathbb{S}^2(1)})_{L^2(\mathbb{S}^2(1))}
=
\int_{0}^{2\pi} \int_{0}^\pi \cos^2 \theta_1( \cos^2 \theta_1-\sin^2\theta_1 \cos^2\theta_2)\sin \theta_1d\theta_1 d\theta_2
=\frac{8\pi}{15}.
\end{align*}
\end{remark}

\begin{lemma}\label{perp}
Let $\{a_N\}_{N}$ be a sequence of positive real numbers 
such that $\{a_N/\sqrt{N-1}\}_{N}$ converges to a positive real number $\alpha$ as $N\to \infty$.
Assume $n<N$.
For $F_N \in E^{n}(\mathbb{S}^N(a_N))$ and its horizontal part $f_N$,
it follows that
\[
\|f_N\|^2_{L^2(\Gamma^n_\alpha)}
=\frac{\|F_N\|^2_{L^2(\mathbb{S}^N(a_N))}}{\vol_{\mathbb{S}^{N-n}(a_N)}(\mathbb{S}^{N-n}(a_N))}.
\]
\end{lemma}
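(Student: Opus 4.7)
The plan is to reduce both sides to the same integral over the disk $D^n_{a_N}$, so that the ratio appears cleanly as the volume factor on the right. First I would unpack the left-hand side. By construction $f_N(x)^2 = F_N(x, r(x) e_1^{N-n+1})^2 \, \omega_{a_N,\alpha}(x)$ with $r(x) := \sqrt{a_N^2 - |x|_2^2}$, and the factor $(2\pi\alpha^2)^{n/2} e^{|x|_2^2/(2\alpha^2)}$ built into $\omega_{a_N,\alpha}$ was engineered as the reciprocal of the Gaussian density. Hence $\omega_{a_N,\alpha}(x)\, d\gamma^n_\alpha(x) = (1 - |x|_2^2/a_N^2)^{(N-n-1)/2} \mathbbm{1}_{D^n_{a_N}}(x) \, dx$, and
\[
\|f_N\|_{L^2(\Gamma^n_\alpha)}^2 = \int_{D^n_{a_N}} F_N\bigl(x, r(x) e_1^{N-n+1}\bigr)^2 \left(1 - \frac{|x|_2^2}{a_N^2}\right)^{\frac{N-n-1}{2}} dx.
\]

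For the right-hand side I would use two ingredients. First, every $F_N \in E^n(\mathbb{S}^N(a_N))$ is a finite sum of restrictions $P_{N,n,K}|_{\mathbb{S}^N(a_N)}$, each of which, by the form $\sum_j C_j(N-n)|y|_2^{2j}\Delta^{j}_{\mathbb{R}^n} x^K$ exposed in the proof of Lemma~\ref{hori}, depends on $y$ only through $|y|_2$; so $F_N(x,y) = F_N(x, |y|_2 e_1^{N-n+1})$ on $\mathbb{S}^N(a_N)$. Second, since $n < N$ I would parametrize $\mathbb{S}^N(a_N) \setminus \{y=0\}$ by $\Phi(x,\omega) := (x, r(x)\omega)$ for $(x,\omega) \in D^n_{a_N} \times \mathbb{S}^{N-n}(1)$. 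A short Gram-determinant computation then yields
\[
d\!\vol_{\mathbb{S}^N(a_N)}\!\bigl(\Phi(x,\omega)\bigr) = a_N\, r(x)^{N-n-1}\, dx\, d\!\vol_{\mathbb{S}^{N-n}(1)}(\omega),
\]
since the $x$-block of the pulled-back metric is $I_n + r(x)^{-2} x x^T$ with determinant $a_N^2/r(x)^2$, the $\omega$-block is $r(x)^2$ times the round metric on $\mathbb{S}^{N-n}(1)$, and the cross terms vanish because vectors tangent to $\mathbb{S}^{N-n}(1)$ at $\omega$ are perpendicular to $\omega$.

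Combining the two ingredients, $F_N(x, r(x)\omega)^2$ is independent of $\omega$, so integration over the unit sphere contributes $\vol_{\mathbb{S}^{N-n}(1)}(\mathbb{S}^{N-n}(1))$; the scaling $\vol_{\mathbb{S}^{N-n}(a_N)}(\mathbb{S}^{N-n}(a_N)) = a_N^{N-n}\vol_{\mathbb{S}^{N-n}(1)}(\mathbb{S}^{N-n}(1))$ then converts the prefactor $a_N \, r(x)^{N-n-1}\vol_{\mathbb{S}^{N-n}(1)}(\mathbb{S}^{N-n}(1))$ into $(1-|x|_2^2/a_N^2)^{(N-n-1)/2}\vol_{\mathbb{S}^{N-n}(a_N)}(\mathbb{S}^{N-n}(a_N))$, which matches the integrand for $\|f_N\|_{L^2(\Gamma^n_\alpha)}^2$ exactly. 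Dividing yields the claimed identity. I anticipate no serious obstacle: the rotational invariance from Lemma~\ref{hori} makes the fiber integration trivial, and the definition of $\omega_{a_N,\alpha}$ is manifestly set up so that the Gaussian cancellation on the left matches the $a_N$-scaled fiber volume on the right; the only care needed is the Gram-determinant bookkeeping that produces the exponent $N-n-1$ rather than $N-n$.
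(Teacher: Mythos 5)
Your proof is correct, and it reaches the same identity by a route that differs in detail from the paper's. The paper parametrizes $\mathbb{S}^N(a_N)$ by the standard iterated angular coordinates $\zeta\colon [0,\pi]^{N-1}\times[0,2\pi]\to \mathbb{S}^N(1)$, scaled by $a_N$, observes that $F_N(a_N\zeta(\theta))$ depends only on $\theta_1,\dots,\theta_n$, and then simply factors the product of $\sin$-integrals over the remaining angles to produce $\vol_{\mathbb{S}^{N-n}(a_N)}(\mathbb{S}^{N-n}(a_N))$ together with the weight $(1-|x|_2^2/a_N^2)^{(N-n-1)/2}$ on $D^n_{a_N}$. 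You instead use the intrinsic product parametrization $\Phi(x,\omega)=(x, r(x)\omega)$ of $D^n_{a_N}\times\mathbb{S}^{N-n}(1)$ onto $\mathbb{S}^N(a_N)$ (minus the set $\{y=0\}$, which has measure zero since $n<N$) and compute the Riemannian volume element via the Gram determinant, correctly obtaining $d\!\vol_{\mathbb{S}^N(a_N)} = a_N\, r(x)^{N-n-1}\,dx\,d\!\vol_{\mathbb{S}^{N-n}(1)}(\omega)$ from $\det(I_n + r^{-2}xx^T)=a_N^2/r^2$ together with the block-diagonal structure. Both proofs exploit the same structural fact — that $F_N$ restricted to the sphere is invariant under rotations of the last $N-n+1$ coordinates, established in Lemma~\ref{hori} — so the fiber integral is trivial; the difference is only in whether the Jacobian of the fibration is read off from known spherical-coordinate formulas or derived directly. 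Your Gram-determinant derivation is slightly more self-contained and makes the origin of the exponent $N-n-1$ more transparent, while the paper's angular-coordinate computation is shorter to state given the standard volume formula for spheres.
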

\begin{proof}
%
%%%
Set $\Theta:=[0,\pi]^{N-1}\times[0,2\pi]$ and 
define $\zeta=(\xi,\eta):\Theta\to \mathbb{S}^N(1) \subset \mathbb{R}^n\times \mathbb{R}^{N-n+1}$ by
\[
\zeta(\theta)_i=
\begin{cases}
\cos \theta_1&\text{if\ }i=1,\\
\displaystyle \left(\prod_{j=1}^{i-1} \sin \theta_j \right)\cos \theta_i&\text{if\ }2\leq i\leq N,\\
\displaystyle \prod_{j=1}^{N} \sin \theta_j & \text{if\ }i=N+1.
\end{cases}
\]
Moreover, put 
$f(x):=F_N(x, \sqrt{a_N^2-|x|_2^2}e_1^{N-n+1})$ for $x\in D^n_{a_N}$.
Then the change of variables yields
\begin{align*}
\|F_N\|^2_{L^2(\mathbb{S}^N(a_N))}
&=a_N^N\int_{\Theta} F_N(a_N \zeta(\theta))^2 \left( \prod_{i=1}^{N-1} \sin^{N-i} \theta_i \right) d\theta \\
&=a_N^N\int_{\Theta} 
f\left(a_N \xi(\theta)\right)^2 \left( \prod_{i=1}^{N-1} \sin^{N-i} \theta_i \right) d\theta \\
&=2\pi \left(\prod_{i=n+1}^{N-1} \int_0^\pi \sin^{N-i} \theta d \theta\right) \cdot a^{N-n}_N\int_{D^n_{a_N}} 
f\left(x\right)^2
 \left(1-\frac{|x|_2^2}{a_N^2}\right)^{\frac{N-n-1}{2}}dx \\
&=\vol_{\mathbb{S}^{N-n}(a_N)}(\mathbb{S}^{N-n}(a_N))\int_{\mathbb{R}^n} f_N(x)^2d\gamma^n_\alpha(x).
\end{align*}
This concludes the proof of the lemma.
\end{proof}

As a corollary of Theorem~\ref{main1}, 
we show the $L^2$-strong convergence of the heat flow and the Mosco convergence of the Cheeger energy.
%%%
These convergences with respect to the pointed measured Gromov--Hausdorff topology
under the curvature-dimension condition are known. 
For example, see \cite{AH2}*{Theorem~1.5.4}, \cite{GMS}*{Theorems~6.8, 6.11}, \cite{Kazukawa}*{Theorem~1.1} and also 
\cite{AH}*{Theorem~3.4 and Proposition~3.9},\cite{ZZ}*{Theorem~3.8}.
%%%%
The results are concerned with the asymptotic behaviors of Laplacians.
It should be mentioned that, for each \text{$k\in \mathbb{N}$}, 
Peterson--Sengputa~\cite{PS}*{Proposition~5.4} proved the convergence of 
$\Delta_{\mathbb{S}^N(\sqrt{N-1})}$ to the Hermite operator as $N\to \infty$ on  the space of 
homogeneous polynomials  of degree at most $k$, 
and that the  projection of  the Hermite operator onto the first $n$-coordinates is $\Delta_{\Gamma^n_1}$ (see also~\cite{UK}*{Proposition~3}).

%%%%%%%%%%
\begin{corollary}\label{cor}
Let $\{a_N\}_{N}$ be a sequence  of positive real numbers such that 
$\{a_N/\sqrt{N-1}\}_{N}$ converges to a positive real number $\alpha$ as $N\to \infty$.
Let $U_N : [0,\infty) \times \mathbb{S}^N(a_N) \to\mathbb{R}$ denote the solution to the heat equation 
\[
\begin{cases}
\dfrac{\partial}{\partial t} U=\Delta_{\mathbb{S}^N(a_N)} U&\text{in\ } (0,\infty) \times \mathbb{S}^N(a_N),\\
U(0,\cdot)=F_N&\text{in\ } \mathbb{S}^N(a_N),
\end{cases}
\]
where $F_N \in E^{n}(\mathbb{S}^N(a_N))$.
Then $U_N(t,\cdot)\in E^{n}(\mathbb{S}^N(a_N))$ for any $t\geq 0$.
%%%%%

Let $f_N$ and $u_N(\cdot, t)$ be the horizontal part of $F_N$ and $U_N(t,\cdot)$, respectively.
If $\{f_N\}_{N}$ converges to $f_\infty$ weakly in $L^2(\Gamma^n_\alpha)$ as $N\to \infty$,
then $\{u_N(t,\cdot)\}_{N}$ converges to $u_\infty(t,\cdot)$ strongly in $L^2(\Gamma^n_\alpha)$ as $N\to \infty$ for each $t>0$
and $\{u_\infty(t,\cdot)\}_{t\geq 0}$ solves the heat equation
\begin{equation}\label{limit}
\begin{cases}
\dfrac{\partial}{\partial t} u=\Delta_{\gamma^n_\alpha} u&\text{in\ } (0,\infty) \times \mathbb{R}^n,\\
u(0,\cdot)=f_\infty&\text{in\ } \mathbb{R}^n.
\end{cases}
\end{equation}
\end{corollary}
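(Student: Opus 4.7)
The first assertion follows from spectral preservation: $E_k^n(\mathbb{S}^N(a_N)) \subset E_k(\mathbb{S}^N(a_N))$ is an eigenspace of $-\Delta_{\mathbb{S}^N(a_N)}$, so decomposing $F_N = \sum_k F_{N,k}$ orthogonally in $L^2(\mathbb{S}^N(a_N))$ with $F_{N,k} \in E_k^n(\mathbb{S}^N(a_N))$ yields $U_N(t,\cdot) = \sum_k e^{-\lambda_k(\mathbb{S}^N(a_N))t} F_{N,k} \in E^n(\mathbb{S}^N(a_N))$. Via the isometry of Lemma~\ref{perp}, this transfers to the orthogonal decomposition $f_N = \sum_k f_{N,k}$ in $L^2(\Gamma^n_\alpha)$ together with $u_N(t,\cdot) = \sum_k e^{-\lambda_k(\mathbb{S}^N(a_N))t} f_{N,k}$, where each $f_{N,k}$ lies in the finite-dimensional subspace $W_{N,k} \subset L^2(\Gamma^n_\alpha)$ spanned by $\{b_{N,K} := Q_{N,n,K;a_N}\sqrt{\omega_{a_N,\alpha}}\}_{K \in \mathbb{N}_0^n(k)}$.

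The key step is the strong $L^2(\Gamma^n_\alpha)$-convergence $b_{N,K} \to (2\pi\alpha^2)^{n/4} Q_{n,K;\alpha}$. Theorem~\ref{main1} gives the pointwise limit $Q_{N,n,K;a_N} \to Q_{n,K;\alpha}$, while $(1 - |x|^2/a_N^2)^{(N-n-1)/2} \to e^{-|x|^2/(2\alpha^2)}$ yields $\omega_{a_N,\alpha}(x) \to (2\pi\alpha^2)^{n/2}$ pointwise. Dominated convergence, using $(1-|x|^2/a_N^2)^{(N-n-1)/2} \leq e^{-(N-n-1)|x|^2/(2a_N^2)}$ to majorize $b_{N,K}^2\,d\gamma^n_\alpha$ by a polynomial-times-Gaussian uniformly in large $N$ (since $(N-n-1)/a_N^2 \to 1/\alpha^2$ and the coefficients of $Q_{N,n,K;a_N}$ are $N$-uniformly bounded), upgrades the pointwise limits of $\|b_{N,K}\|^2_{L^2(\Gamma^n_\alpha)}$ and of $(b_{N,K}, Q_{n,K;\alpha})_{L^2(\Gamma^n_\alpha)}$; expanding $\|b_{N,K} - (2\pi\alpha^2)^{n/4} Q_{n,K;\alpha}\|^2$ then closes the $L^2$-convergence. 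Because Theorem~\ref{main1} also furnishes $\{Q_{n,K;\alpha}\}_{|K|=k}$ as a basis of $E_k(\Gamma^n_\alpha)$, the Gram matrix $G_{N,k} := ((b_{N,K}, b_{N,L}))$ converges to a nondegenerate $G_{\infty,k}$, and the projection formula
\[
f_{N,k} = \sum_{|K|,|L|=k} (G_{N,k}^{-1})_{K,L}\,(f_N, b_{N,L})\,b_{N,K}
\]
combined with the weak convergence $f_N \rightharpoonup f_\infty$ and strong convergence $b_{N,L} \to (2\pi\alpha^2)^{n/4} Q_{n,L;\alpha}$ yields $(f_N, b_{N,L}) \to (f_\infty, (2\pi\alpha^2)^{n/4} Q_{n,L;\alpha})$, and hence $f_{N,k} \to f_{\infty,k}$ strongly in $L^2(\Gamma^n_\alpha)$, where $f_{\infty,k}$ is the orthogonal projection of $f_\infty$ onto $E_k(\Gamma^n_\alpha)$; in particular $f_\infty = \sum_k f_{\infty,k}$.

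For $t > 0$, combining this componentwise convergence with the eigenvalue convergence $\lambda_k(\mathbb{S}^N(a_N)) \to k/\alpha^2$ from Theorem~\ref{main1}, every finite partial sum $\sum_{k \leq K_0} e^{-\lambda_k(\mathbb{S}^N(a_N))t} f_{N,k}$ converges strongly in $L^2(\Gamma^n_\alpha)$ to $\sum_{k \leq K_0} e^{-(k/\alpha^2)t} f_{\infty,k}$. Orthogonality and boundedness of $\|f_N\|_{L^2(\Gamma^n_\alpha)}$ control the tail:
\[
\Big\|\sum_{k > K_0} e^{-\lambda_k(\mathbb{S}^N(a_N))t} f_{N,k}\Big\|_{L^2(\Gamma^n_\alpha)}^2 \leq e^{-2\lambda_{K_0+1}(\mathbb{S}^N(a_N))t}\,\|f_N\|_{L^2(\Gamma^n_\alpha)}^2,
\]
which vanishes uniformly in $N$ as $K_0 \to \infty$. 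Hence $u_N(t,\cdot) \to u_\infty(t,\cdot) := \sum_k e^{-(k/\alpha^2)t} f_{\infty,k}$ strongly in $L^2(\Gamma^n_\alpha)$, and since each $f_{\infty,k} \in E_k(\Gamma^n_\alpha)$ is an eigenfunction of $-\Delta_{\gamma^n_\alpha}$ with eigenvalue $k/\alpha^2$, the function $u_\infty$ is precisely the spectral-expansion solution of~\eqref{limit} with initial datum $f_\infty$. The main obstacle is the strong $L^2(\Gamma^n_\alpha)$-convergence of the horizontal basis $\{b_{N,K}\}$: the weight $\omega_{a_N,\alpha}$ has $N$-dependent support $D^n_{a_N}$ and grows without bound near $|x| \sim a_N$, so constructing a polynomial-times-Gaussian majorant valid for all large $N$ requires careful estimation. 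Once this is in place, the Gram-matrix projection argument and the tail bound via heat-semigroup smoothing are comparatively routine.
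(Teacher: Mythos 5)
Your proof is correct, and it follows the same basic strategy as the paper — expand into eigenfunction components, use the $L^2(\Gamma^n_\alpha)$-strong convergence of the horizontal basis $Q_{N,n,K;a_N}\sqrt{\omega_{a_N,\alpha}}$ established via the estimate $(1-|x|^2/a_N^2)^{(N-n-1)/2}\le e^{-|x|^2/(4\alpha^2)}$ and dominated convergence, pair with the weak convergence $f_N\rightharpoonup f_\infty$ to get coefficient-wise convergence, and then upgrade to strong convergence of $u_N(t,\cdot)$ for $t>0$. The two places you diverge from the paper are worth noting. First, the paper passes to the horizontal picture by (implicitly) treating $\{P_{N,n,K}\}_{|K|=k}$ as an orthonormal system in $L^2(\mathbb{S}^N(a_N))$; your Gram-matrix projection formula handles the degree-$k$ block without assuming orthogonality within a fixed $k$, which is cleaner (and more defensible, since those polynomials need not be mutually orthogonal on the sphere). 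Second, to pass from coefficient-wise to strong convergence the paper invokes Banach--Alaoglu to extract a weak subsequential limit of $u_N(t,\cdot)$, identifies it by testing against the strongly convergent basis, and then proves norm convergence by a Dirichlet-test argument combined with the monotonicity of $L^2$-energy along the heat flow. You instead split the series into a finite part plus a tail and bound the tail by $e^{-2\lambda_{K_0+1}(\mathbb{S}^N(a_N))t}\|f_N\|^2_{L^2(\Gamma^n_\alpha)}$, which is uniform in large $N$ because $\lambda_{K_0+1}(\mathbb{S}^N(a_N))=(K_0+1)(K_0+N)/a_N^2$ is bounded below by a multiple of $K_0+1$ once $a_N^2/(N-1)$ is close to $\alpha^2$, and $\sup_N\|f_N\|_{L^2(\Gamma^n_\alpha)}<\infty$ by uniform boundedness. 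Your direct tail estimate is arguably simpler and avoids subsequence extraction; the paper's Dirichlet-test route has the advantage of exposing the heat-flow energy monotonicity, but the conclusion is the same. One small thing to make fully explicit: $f_{\infty,k}$ is the $L^2(\Gamma^n_\alpha)$-orthogonal projection of $f_\infty$ onto $E_k(\Gamma^n_\alpha)$ because the limiting Gram matrix $G_{\infty,k}$ is exactly $(2\pi\alpha^2)^{n/2}$ times the Gram matrix of $\{Q_{n,K;\alpha}\}$, and $\sum_k f_{\infty,k}=f_\infty$ since the eigenspaces are total in $L^2(\Gamma^n_\alpha)$; with that observed, $u_\infty$ is the standard spectral solution of the Ornstein--Uhlenbeck heat equation with initial datum $f_\infty$.
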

%%%%%%%%%
\begin{proof}
Let $\{\phi_{N,k}\}_{k\in \mathbb{N}}$ be an orthonormal system in $L^2(\mathbb{S}^N(a_N))$
such that each $\phi_{N,k}$ is an eigenfunction of eigenvalue $\lambda_{N,k}$ and 
either $\phi_{N,k}\in E^{n}(\mathbb{S}^N(a_N))$ or $\phi_{N,k}\in E^{n}(\mathbb{S}^N(a_N))^{\perp}$ holds.
It is well-known that $U_N(t,z)$ is given by 
\begin{align*}
U_N(t,z)&=
\sum_{k\in \mathbb{N}}e^{-t\lambda_{N,k}}\left(F_N,\phi_{N,k}\right)_{L^2({\mathbb{S}^N(a_N)})}
 \phi_{N,k}(z). 
\end{align*}
For instance, see~\cite{Ch}*{Section~VI.1}.
We deduce from 
\[
\left(F_N, \phi_{N,k}\right)_{L^2({\mathbb{S}^N(a_N)})}=0
\qquad \text{for\ }\phi_{N,k}\in E^{n}(\mathbb{S}^N(a_N))^{\perp}
\]
that $U_N(t,\cdot)\in E^{n}(\mathbb{S}^N(a_N))$ holds for any $t\geq 0$.

{Without} loss of generality, we may assume that, for each $\phi_{N,k}\in E^{n}(\mathbb{S}^N(a_N))$, there exists $K\in \mathbb{N}^n_0$ such that
\[
\phi_{N,k}
=\frac{P_{N,n,K}|_{\mathbb{S}^N(a_N)}}{ \left\|P_{N,n,K}|_{\mathbb{S}^N(a_N)}\right\|_{L^2(\mathbb{S}^N(a_N))}}.
\]
We shall abbreviate $P_{N,n,K}|_{\mathbb{S}^N(a_N)}$ by $P_{N,n,K}$ when there is no possibility of confusion.
We see that
\begin{align*}
f_N
&=
\sum_{K\in \mathbb{N}_0^{n}}
\frac{\left( F_N,P_{N,n,K}\right)_{L^2(\mathbb{S}^N(a_N))}}
{\left\|P_{N,n,K}\right\|^2_{L^2(\mathbb{S}^N(a_N))}}Q_{N,n,K;a_N} \sqrt{\omega_{a_N,\alpha}},\\
%%%%%%%
u_N(t,\cdot)
&=\sum_{K\in \mathbb{N}_0^{n}}e^{-t\lambda_{|K|}(\mathbb{S}^N(a_N))}
\frac{\left( F_N,P_{N,n,K}\right)_{L^2(\mathbb{S}^N(a_N))}}{\left\|P_{N,n,K}\right\|^2_{L^2(\mathbb{S}^N(a_N))}}Q_{N,n,K;a_N} \sqrt{\omega_{a_N,\alpha}}.
\end{align*}
Similarly, for $f_\infty$ and a solution $u$ to \eqref{limit}, it turns out that
\begin{align*}
f_\infty(x)
&=\sum_{K\in \mathbb{N}_0^{n}}\frac{\left( f_\infty, Q_{n,K;\alpha}\right)_{L^2(\Gamma^n_\alpha)}}{\left\| Q_{n,K;\alpha}\right\|^2_{L^2(\Gamma^n_\alpha)}}Q_{n,K;\alpha},\\
%%%%
u(t,x)
&=
\sum_{K\in \mathbb{N}_0^{n}}e^{-t\lambda_{|K|}(\Gamma^n_\alpha)}
\frac{\left( f_\infty, Q_{n,K;\alpha}\right)_{L^2(\Gamma^n_\alpha)}}{\left\| Q_{n,K;\alpha}\right\|^2_{L^2(\Gamma^n_\alpha)}}Q_{n,K;\alpha}.
\end{align*}
%%%%%%%%
For instance, see~\cite{Bog}*{Theorem~1.4.4}.
As well as the proof of Lemma~\ref{perp}, we find that 
\begin{align*}
\left\| Q_{N,n,K;a_N} (x) \sqrt{\omega_{a_N,\alpha}}\right\|^2_{L^2(\Gamma^n_\alpha)}
&=\frac{\left\|P_{N,n,K}\right\|^2_{L^2(\mathbb{S}^N(a_N))}}{\vol_{\mathbb{S}^{N-n}(a_N)}(\mathbb{S}^{N-n}(a_N))},\\
%%%
\left(f_N, Q_{N,n,K;a_N}\sqrt{\omega_{a_N,\alpha}}\right)_{L^2(\Gamma^n_\alpha)}
&=
\frac{\left( F_N,P_{N,n,K}\right)_{L^2(\mathbb{S}^N(a_N))}}{\vol_{\mathbb{S}^{N-n}(a_N)}(\mathbb{S}^{N-n}(a_N))},
\end{align*}
for $n<N$.
It follows from  the inequality $1-\rho\leq e^{-\rho}$ on $\rho \in\mathbb{R}$ that
\begin{equation}\label{expbdd}
\left(1-\frac{r^2}{a_N^2}\right)^{\frac{N-n-1}{2}}
 \leq
  \exp\left(-\frac{r^2}{a_N^2} \cdot \frac{N-n-1}{2}\right)
\leq
  \exp\left(-\frac{r^2}{4\alpha^2}\right) \quad \text{on }r\in (-a_N,a_N)
\end{equation}
for large enough $N\in \mathbb{N}$.
Then %$\{\omega_{a_N,\alpha}\}_N$  is dominated  by $(2\pi \alpha^2)^{\frac{n}{2}}e^{\frac{|x|^2}{4\alpha^2}}$ and 
\[
\{(2\pi \alpha^2)^{-\frac{n}{4}} Q_{N,n,K;a_N} \sqrt{\omega_{a_N,\alpha}} \}_{N}
\]
is dominated by the product of $\exp(|x|^2/8\alpha^2)$ and a certain polynomial on~$\mathbb{R}^n$,
where the product belongs to $L^2(\Gamma^n_\alpha)$,
hence the sequence converges to $Q_{n,K;\alpha}$ strongly in $L^2(\Gamma^n_\alpha)$ as $N\to\infty$
by the dominated convergence theorem.
This with the weak convergence of $\{f_N\}_{N}$ in $L^2(\Gamma^n_\alpha)$ yields 
\begin{align}
\begin{split}
\label{conv}
\left\| Q_{n,K;\alpha}\right\|^2_{L^2(\Gamma^n_\alpha)}
&=
\lim_{N\to\infty} \left\|(2\pi \alpha^2)^{-\frac{n}{4}}Q_{N,n,K;a_N} \sqrt{\omega_{a_N,\alpha}}\right\|^2_{L^2(\Gamma^n_\alpha)}\\
&=\lim_{N\to\infty}\frac{\left\|P_{N,n,K}\right\|^2_{L^2(\mathbb{S}^N(a_N))}}{\vol_{\mathbb{S}^{N}(a_N)}(\mathbb{S}^{N}(a))},\\
%%%%
\left( f_\infty, Q_{n,K;\alpha}\right)_{L^2(\Gamma^n_\alpha)}
&=
\lim_{N\to\infty}\left(f_N, (2\pi \alpha^2)^{-\frac{n}{4}}Q_{N,n,K;a_N} \sqrt{\omega_{a_N,\alpha}}\right)_{L^2(\Gamma^n_\alpha)}\\
&=
(2\pi \alpha^2)^{\frac{n}{4}}\lim_{n\to\infty}\frac{\left( F_N,P_{N,n,K}\right)_{L^2(\mathbb{S}^N(a_N))}}{\vol_{\mathbb{S}^{N}(a_N)}(\mathbb{S}^{N}(a_N))},
\end{split}
\end{align}
%%%%
where we used the Stirling's approximation to have
\[
\frac{\vol_{\mathbb{S}^{N}(a_N)}(\mathbb{S}^{N}(a_N))}{\vol_{\mathbb{S}^{N-n}(a_N)}(\mathbb{S}^{N-n}(a_N))}
\xrightarrow{N\to\infty}(2\pi \alpha^2)^{\frac{n}{2}}.
\]
The monotonicity of the $L^2$-energy along the heat flow (see~\cite{Ch}*{Proposition~VI.1.1}) provides
\begin{align*}
\sup_{N\in\mathbb{N}}\left\| u_N(t,\cdot)\right\|^2_{L^2(\Gamma_\alpha^n)}
&=
\sup_{N\in\mathbb{N}}\frac{\left\| U_N(t,\cdot) \right\|_{L^2(\mathbb{S}^N(a_N))}^2}{\vol_{\mathbb{S}^{N-n}(a_N)}(\mathbb{S}^{N-n}(a_N))}\\
&\leq
\sup_{N\in\mathbb{N}}\frac{\left\| F_N\right\|_{L^2(\mathbb{S}^N(a_N))}^2}{\vol_{\mathbb{S}^{N-n}(a_N)}(\mathbb{S}^{N-n}(a_N))}
=\sup_{N\in\mathbb{N}}\left\| f_N\right\|^2_{L^2(\Gamma_\alpha^n)}<\infty.
\end{align*}
Then the Banach--Alaoglu theorem implies that there exists a subsequence of $\{ u_{N}(t, \cdot) \}_{N}$, still denoted by
$\{ u_{N}(t, \cdot) \}_{N}$, converging weakly in $L^2(\Gamma_\alpha^n)$.
We denote by $u_\infty(t,\cdot)$ the limit.
We apply the strong convergence of $\{(2\pi \alpha^2)^{-\frac{n}{4}} Q_{N,n,K;a_N}\sqrt{\omega_{a_N,\alpha}}\}_{N}$ again to have
\begin{align*}
\left( u_\infty(t,\cdot), Q_{n,K;\alpha}\right)_{L^2(\Gamma^n_\alpha)}
=&\lim_{N\to \infty}\left( u_{N}(t,\cdot), (2\pi \alpha^2)^{-\frac{n}{4}}Q_{N,n,K;a_{N}}\sqrt{\omega_{a_N,\alpha}}\right)_{L^2(\Gamma^n_\alpha)}\\
%%%%%%%
=&(2\pi \alpha^2)^{\frac{n}{4}}\lim_{N\to \infty} 
e^{-t\lambda_{|K|}(\mathbb{S}^N(a_N))} 
\frac{\left( F_N,P_{N,n,K}\right)_{L^2(\mathbb{S}^N(a_N))}}{\vol_{\mathbb{S}^{N}(a_N)}(\mathbb{S}^{N}(a_N))}\\
=& e^{-t\lambda_{|K|}(\Gamma^n_\alpha)}\left( f_\infty, Q_{n,K;\alpha}\right)_{L^2(\Gamma^n_\alpha)}\\
=&\left( u(t,\cdot), Q_{n,K;\alpha}\right)_{L^2(\Gamma^n_\alpha)},
\end{align*}
which leads to $u_\infty(t,\cdot)=u(t,\cdot)$.
%%%
Thus $\{u_N(t,\cdot)\}_{N}$ converges to $u(t,\cdot)$ weakly in $L^2(\Gamma^n_\alpha)$ as $N\to \infty$ for each $t\geq0$.

For $N\in \mathbb{N}$ and $k\in \mathbb{N}_0$, set
\begin{align*}
%%%
B_{N,k}(t)&:=\sum_{K\in \mathbb{N}_0^{n}, |K|\leq k} e^{-2t\lambda_{|K|}(\mathbb{S}^N(a_N))}
\frac{\left( F_N,P_{N,n,K}\right)^2_{L^2(\mathbb{S}^N(a_N))}}{{\vol_{\mathbb{S}^{N-n}(a_N)}(\mathbb{S}^{N-n}(a_N))}\cdot\left\|P_{N,n,K}\right\|^2_{L^2(\mathbb{S}^N(a_N))}},\\
%%%%
B_{k}(t)&:=\sum_{K\in \mathbb{N}_0^{n}, |K|\leq k} e^{-2t\lambda_{|K|}(\Gamma^n_\alpha)} 
\frac{\left( f_\infty,Q_{n,K;\alpha}\right)^2_{L^2(\Gamma^n_\alpha)}}{\left\|Q_{n,K;\alpha}\right\|^2_{L^2(\Gamma^n_\alpha)}}.
%%%%
\end{align*}
By~\eqref{conv} and Theorem~\ref{main1}, we see that $B_{N,k}(t)\to B_{k}(t)$ as $N\to \infty$ and
\[
\sup_{N\in \mathbb{N},k\in \mathbb{N}_0}B_{N,k}(t)\leq 
\sup_{N\in \mathbb{N}}\lim_{k\to\infty}B_{N,k}(t)
=\sup_{N\in\mathbb{N}}\left\| u_N(t,\cdot)\right\|^2_{L^2(\Gamma_\alpha^n)}
\leq
\sup_{N\in\mathbb{N}}\left\| f_N\right\|^2_{L^2(\Gamma_\alpha^n)}<\infty.
\]
It follows from Dirichlet's test that
\begin{align*}
\left|\lim_{m\to\infty}B_{N,m}(t) -B_{k}(t)\right|
-
\left|B_{k}(t)-B_{N,k}(t)\right|
&\leq
\left|\lim_{m\to\infty}B_{N,m}(t) -B_{N,k}(t)\right|\\
&\leq 2\sup_{m\in \mathbb{N}} \|f_m\|_{L^2(\Gamma^n_\alpha)} e^{-2t\lambda_{k+1}(\mathbb{S}^N(a_N))}. 
\end{align*}
For $t>0$, letting $N\to \infty$ first and then $k\to \infty$ leads to
\[
\lim_{N\to \infty}\left\|u_N(t,\cdot)\right\|^2_{L^2(\Gamma_\alpha^n)}
=\lim_{N\to\infty}\lim_{m\to\infty}B_{N,m}(t) 
=\lim_{k\to\infty}B_{k}(t) 
=\left\| u(t,\cdot)\right\|^2_{L^2(\Gamma_\alpha^n)},
\]
which is the equivalent to the strong convergence of $\{u_N(t,\cdot)\}_{N}$ to $u(t,\cdot)$ in $L^2(\Gamma^n_\alpha)$
as \text{$N\to \infty$.} 
This completes the proof of the corollary.
\end{proof}
%%%%%%%%%%%%%%%%
%
As well as $H^1_0(V_{\alpha R}^n, \gamma^n_\alpha)$,
we define $H^1(M,\mu)$ as the completion of $C_0^\infty(M)$ with respect to the inner product given by
\[
(f_1, f_2)_{H^1(M,\mu)}:=\int_{M} f_1f_2 d\mu
+\int_{M} g(\nabla_{M} f_1,\nabla_{M} f_2) d\mu \qquad\text{for\ } f_1, f_2\in C^\infty_0(M).
\]
For $f\in H^1(M,\mu)$, we write $|\nabla f|_M:=g(\nabla_M f, \nabla_M f)^{1/2}$.
By~\cite{Bog}*{Proposition~1.5.4},
\begin{align*}
H^{1}(\Gamma^n_{\alpha})
&=\left\{ f\in L^2(\Gamma^n_\alpha)\biggm| 
\sum_{K\in \mathbb{N}^n_0} \lambda_{|K|}(\Gamma^n_\alpha)
 \frac{ \left(f, Q_{n,K;\alpha} \right)^2_{L^2(\Gamma^n_\alpha)} }{\| Q_{n,K;\alpha} \|^2_{L^2(\Gamma^n_\alpha)}} <\infty \right\}.
\end{align*}
Similarly, we see that 
\begin{align*}
&H^1(\mathbb{S}^N(a_N)) \cap E^{n}(\mathbb{S}^N(a_N))\\
=&\left\{ F_N \in E^{n}(\mathbb{S}^N(a_N))\biggm| 
\sum_{K\in \mathbb{N}^n_0}
\lambda_{|K|}(\mathbb{S}^N(a_N)) \frac{ \left(F_N, P_{N,n,K} \right)^2_{L^2(\mathbb{S}^N(a_N))} }{\| P_{N,n,K} \|^2_{L^2(\mathbb{S}^N(a_N))}} <\infty \right\}.
%%%%%
\end{align*}
For $f\in H^{1}(\Gamma^n_{\alpha})$ and $F_N\in H^1(\mathbb{S}^N(a_N)) \cap E^{n}(\mathbb{S}^N(a_N))$, 
we find that
\begin{align}\label{cheeger}
%%%%
\begin{split}
\int_{\mathbb{R}^n} |\nabla f|_{\mathbb{R}^n}^2d\gamma^n_\alpha
&=\sum_{K\in \mathbb{N}_0^{n}}\lambda_{|K|}(\Gamma^n_\alpha)\frac{\left( f, Q_{n,K;\alpha}\right)_{L^2(\Gamma^n_\alpha)}^2}{\left\| Q_{n,K;\alpha}\right\|^2_{L^2(\Gamma^n_\alpha)}},\\
\int_{\mathbb{S}^N(a_N)}|\nabla F_N|_{\mathbb{S}^N(a_N)}^2 d\!\vol_{\mathbb{S}^N(a_N)}
&=\sum_{K\in \mathbb{N}_0^{n}} \lambda_{|K|}(\mathbb{S}^N(a_N)) \frac{\left( F_N,P_{N,n,K}\right)_{L^2(\mathbb{S}^N(a_N))}^2}{\left\|P_{N,n,K}\right\|^2_{L^2(\mathbb{S}^N(a_N))}}.
\end{split}
\end{align}
%

%%%%%%%%%%%
\begin{corollary}\label{cor2}
Let $\{a_N\}_{N}$ be a sequence  of positive real numbers such that 
$\{a_N/\sqrt{N-1}\}_{N}$ converges to a positive real number $\alpha$ as $N\to \infty$.
Define the \emph{Cheeger energy} $\mathsf{Ch}_N$ on $H^1(\mathbb{S}^N(a_N)) \cap E^{n}(\mathbb{S}^N(a_N))$ by
\[
\mathsf{Ch}_N(F_N)
:=
\frac{1}{\vol_{\mathbb{S}^{N-n}(a_N)}(\mathbb{S}^{N-n}(a_N))}
\int_{\mathbb{S}^{N}(a_N)}|\nabla F_N|_{\mathbb{S}^N(a_N)}^2 d\!\vol_{\mathbb{S}^N(a_N)}.
\]
For $F_N\in H^1(\mathbb{S}^N(a_N)) \cap E^{n}(\mathbb{S}^N(a_N))$ and its horizontal part $f_N$, 
if $\{f_N\}_{N}$ converges to $f_\infty$ weakly in $L^2(\Gamma^n_\alpha)$ as $N\to \infty$,
then
\begin{align}\label{liminf}
\int_{\mathbb{R}^n} |\nabla f_\infty|_{\mathbb{R}^n}^2d\gamma^n_\alpha \leq \liminf_{N\to \infty} \mathsf{Ch}_N(F_N).
\end{align}
Conversely, for $\widetilde{f}\in H^1(\Gamma^n_\alpha)$,
there exists $\widetilde{F}_N\in H^1(\mathbb{S}^N(a_N)) \cap E^{n}(\mathbb{S}^N(a_N))$ such that 
the sequence of the horizontal parts of $\widetilde{F}_N$ converges to $\widetilde{f}$ strongly in $L^2(\Gamma^n_\alpha)$ as $N\to \infty$ and
\begin{align}\label{limok}
\int_{\mathbb{R}^n} |\nabla\widetilde{f}\,|_{\mathbb{R}^n}^2d\gamma^n_\alpha = \lim_{N\to \infty} \mathsf{Ch}_N(\widetilde{F}_N).
\end{align}
\end{corollary}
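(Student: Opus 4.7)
The plan is to reduce the Mosco convergence statement to convergence of $\ell^2$-type series by passing to orthonormal bases adapted to the spectral decomposition, and then apply Fatou's lemma and a diagonal truncation. Set
$$\widetilde{Q}_{N,K} := \frac{Q_{N,n,K;a_N}\sqrt{\omega_{a_N,\alpha}}}{\|Q_{N,n,K;a_N}\sqrt{\omega_{a_N,\alpha}}\|_{L^2(\Gamma^n_\alpha)}}, \qquad \widetilde{Q}_K := \frac{Q_{n,K;\alpha}}{\|Q_{n,K;\alpha}\|_{L^2(\Gamma^n_\alpha)}}.$$
By Lemma~\ref{perp}, $\widetilde{Q}_{N,K}$ is precisely the horizontal part of the $L^2(\mathbb{S}^N(a_N))$-normalized element $\sqrt{\vol_{\mathbb{S}^{N-n}(a_N)}(\mathbb{S}^{N-n}(a_N))}\,P_{N,n,K}/\|P_{N,n,K}\|_{L^2(\mathbb{S}^N(a_N))}$, and the argument in the proof of Corollary~\ref{cor} shows $\widetilde{Q}_{N,K}\to \widetilde{Q}_K$ strongly in $L^2(\Gamma^n_\alpha)$ (using~\eqref{conv} and the Stirling ratio $\vol_{\mathbb{S}^N(a_N)}/\vol_{\mathbb{S}^{N-n}(a_N)}\to (2\pi\alpha^2)^{n/2}$). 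Setting $c_{N,K} := (f_N,\widetilde{Q}_{N,K})_{L^2(\Gamma^n_\alpha)}$ and $c_K := (f_\infty,\widetilde{Q}_K)_{L^2(\Gamma^n_\alpha)}$, the spectral formulas~\eqref{cheeger} combined with Lemma~\ref{perp} yield
$$\mathsf{Ch}_N(F_N)=\sum_{K\in\mathbb{N}_0^n} \lambda_{|K|}(\mathbb{S}^N(a_N))\,c_{N,K}^2,\qquad \int_{\mathbb{R}^n}|\nabla f_\infty|_{\mathbb{R}^n}^2\,d\gamma^n_\alpha=\sum_{K\in\mathbb{N}_0^n} \lambda_{|K|}(\Gamma^n_\alpha)\,c_K^2,$$
while Theorem~\ref{main1} gives $\lambda_{|K|}(\mathbb{S}^N(a_N))\to \lambda_{|K|}(\Gamma^n_\alpha)$ for every $K$.

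For the liminf inequality~\eqref{liminf}, the weak convergence $f_N\rightharpoonup f_\infty$ paired with strong convergence $\widetilde{Q}_{N,K}\to \widetilde{Q}_K$ (split the inner product as $(f_N,\widetilde{Q}_K)+(f_N,\widetilde{Q}_{N,K}-\widetilde{Q}_K)$ and use boundedness of $\|f_N\|_{L^2(\Gamma^n_\alpha)}$) yields $c_{N,K}\to c_K$ for each fixed $K$. Fatou's lemma with respect to the counting measure on $\mathbb{N}_0^n$ applied to the nonnegative integrand $\lambda_{|K|}(\mathbb{S}^N(a_N))c_{N,K}^2$ then gives
$$\sum_{K\in\mathbb{N}_0^n} \lambda_{|K|}(\Gamma^n_\alpha)\,c_K^2\leq \liminf_{N\to\infty}\sum_{K\in\mathbb{N}_0^n} \lambda_{|K|}(\mathbb{S}^N(a_N))\,c_{N,K}^2,$$
which is~\eqref{liminf}.

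For the recovery identity~\eqref{limok}, expand $\widetilde{f}=\sum_K c_K \widetilde{Q}_K$ and truncate $\widetilde{f}^{(m)}:=\sum_{|K|\leq m}c_K\widetilde{Q}_K$; by the $H^1(\Gamma^n_\alpha)$ characterization before~\eqref{cheeger}, $\widetilde{f}^{(m)}\to \widetilde{f}$ strongly in $L^2(\Gamma^n_\alpha)$ and $\int|\nabla\widetilde{f}^{(m)}|_{\mathbb{R}^n}^2\,d\gamma^n_\alpha\to \int|\nabla\widetilde{f}|_{\mathbb{R}^n}^2\,d\gamma^n_\alpha$ as $m\to\infty$. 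Define
$$\widetilde{F}_N^{(m)} := \sum_{|K|\leq m}c_K \sqrt{\vol_{\mathbb{S}^{N-n}(a_N)}(\mathbb{S}^{N-n}(a_N))}\,\frac{P_{N,n,K}|_{\mathbb{S}^N(a_N)}}{\|P_{N,n,K}\|_{L^2(\mathbb{S}^N(a_N))}}\in H^1(\mathbb{S}^N(a_N))\cap E^n(\mathbb{S}^N(a_N));$$
its horizontal part is $\sum_{|K|\leq m}c_K \widetilde{Q}_{N,K}\to \widetilde{f}^{(m)}$ strongly in $L^2(\Gamma^n_\alpha)$, and $\mathsf{Ch}_N(\widetilde{F}_N^{(m)})=\sum_{|K|\leq m}\lambda_{|K|}(\mathbb{S}^N(a_N))c_K^2\to \sum_{|K|\leq m}\lambda_{|K|}(\Gamma^n_\alpha)c_K^2$ by Theorem~\ref{main1}. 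A standard diagonal choice $m=m_N\nearrow\infty$ produces $\widetilde{F}_N:=\widetilde{F}_N^{(m_N)}$ satisfying both the strong $L^2(\Gamma^n_\alpha)$-convergence of horizontal parts to $\widetilde{f}$ and~\eqref{limok}. The main technical obstacle is the normalization bookkeeping needed to identify $\{\widetilde{Q}_{N,K}\}$ as horizontal parts of sphere-orthonormal elements and to verify their strong $L^2(\Gamma^n_\alpha)$-convergence to $\{\widetilde{Q}_K\}$; all of the relevant identities are already compiled in~\eqref{conv} and its derivation, after which both parts reduce to countable Fatou and a routine diagonal argument.
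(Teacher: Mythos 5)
Your liminf argument is essentially identical to the paper's: both rewrite $\mathsf{Ch}_N(F_N)$ and $\int|\nabla f_\infty|^2\,d\gamma^n_\alpha$ as $\ell^2$-type series via \eqref{cheeger} and Lemma~\ref{perp}, use \eqref{conv} together with the weak-strong pairing to get coefficientwise convergence $c_{N,K}\to c_K$ and eigenvalue convergence from Theorem~\ref{main1}, and then invoke Fatou on the counting measure. That part matches.

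For the recovery sequence the route is genuinely different. The paper does not truncate: it sets
\[
\widetilde{F}_N
= \sum_{K\in\mathbb{N}_0^n}\sqrt{\vol_{\mathbb{S}^{N-n}(a_N)}(\mathbb{S}^{N-n}(a_N))\,\frac{a_N^2}{|K|+N-1}\cdot\frac{(\widetilde{f},Q_{n,K;\alpha})^2_{L^2(\Gamma^n_\alpha)}}{\alpha^2\|Q_{n,K;\alpha}\|^2_{L^2(\Gamma^n_\alpha)}}}\;\frac{P_{N,n,K}|_{\mathbb{S}^N(a_N)}}{\|P_{N,n,K}\|_{L^2(\mathbb{S}^N(a_N))}},
\]
whose coefficients are tuned so that $\mathsf{Ch}_N(\widetilde{F}_N)=\int_{\mathbb{R}^n}|\nabla\widetilde{f}|^2\,d\gamma^n_\alpha$ holds \emph{exactly} for every $N$ (the factor $a_N^2/(|K|+N-1)$ cancels the $N$-dependence in $\lambda_{|K|}(\mathbb{S}^N(a_N))$), so \eqref{limok} is immediate and only the strong $L^2(\Gamma^n_\alpha)$-convergence of the horizontal parts needs verification. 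Your construction instead takes the finite truncations $\widetilde{F}_N^{(m)}$ with the $N$-independent coefficients $c_K$, for which $\mathsf{Ch}_N(\widetilde{F}_N^{(m)})=\sum_{|K|\le m}\lambda_{|K|}(\mathbb{S}^N(a_N))c_K^2$ converges only in the double limit, and then diagonalizes. Both are correct; the paper's one-shot formula avoids the diagonal but requires the dominated-convergence/weak-plus-norm argument to upgrade to strong $L^2$ convergence of an infinite sum, whereas your truncation makes each step elementary at the cost of bookkeeping in the diagonal choice $m_N\nearrow\infty$. One small advantage of your version: since you keep $c_K$ (with sign) rather than $\sqrt{(\widetilde{f},Q_{n,K;\alpha})^2}$ as the paper writes, your horizontal parts manifestly converge to $\widetilde{f}$ rather than to the function with all Fourier coefficients made nonnegative; the paper's displayed formula has an apparent sign ambiguity at that point.
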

%%%%%%%%%
\begin{proof}
By Theorem~\ref{main1}, $ \lambda_{|K|}(\mathbb{S}^N(a_N))\to \lambda_{|K|}(\Gamma^n_\alpha)$ as $N\to\infty$.
Moreover, if $\{f_N\}_{N}$ converges to $f_\infty$ weakly in $L^2(\Gamma^n_\alpha)$ as $N\to \infty$,
then \eqref{conv} holds.
These and \eqref{cheeger} with Fatou's lemma provide \eqref{liminf}.

Conversely, for $\widetilde{f}\in H^1(\Gamma^n_\alpha)$,
we can choose $\widetilde{F}_N\in H^1(\mathbb{S}^N(a_N)) \cap E^{n}(\mathbb{S}^N(a))$ as
\begin{align*}
\widetilde{F}_N
= 
\sum_{K\in \mathbb{N}^n_0}\sqrt{\vol_{\mathbb{S}^{N-n}(a_N)}(\mathbb{S}^{N-n}(a_N))
 \frac{a_N^2%\left\|P_{N,n,K}\right\|^2_{L^2(\mathbb{S}^N(a_N))}
 }{|K|+N-1} 
\cdot
 \frac{( \widetilde{f}, Q_{n,K;\alpha})_{L^2(\Gamma^n_\alpha)}^2}{\alpha^2\left\| Q_{n,K;\alpha}\right\|^2_{L^2(\Gamma^n_\alpha)}}}
\cdot \frac{ P_{N,n,K}|_{\mathbb{S}^N(a_N)} }{\|P_{N,n,K}\|_{L^2(\mathbb{S}^N(a_N))}}.
\end{align*}
In this case, 
the sequence of the horizontal parts of $\widetilde{F}_N$ converges to $\widetilde{f}$ strongly in $L^2(\Gamma^n_\alpha)$ as $N\to \infty$ and
\eqref{limok} holds.
This completes the proof of the corollary.
\end{proof}
%
%
%%%%%%%%%%

%%%%%%%%%%%%%%%%%%%%%%%%%%%%%%%%%%%%%%%%%%%%%%%%
\section{Proof of Theorem~\rm{\ref{main2}}}
%%%%%%%%%%%%%%%%%%%%%%%%%%%%%%%%%%%%%%%%%%%%%%%%
We begin with two lemmas concerning boundedness.
Notice that Stirling's approximation yields
\[
\int_{-a_N}^{a_N} s_N(r)^{\frac{N}{2}-1} dr \xrightarrow{N\to \infty} \sqrt{2\pi} \alpha \qquad
\text{and}\qquad
w_N(r)\xrightarrow{N\to \infty} w_\infty(r)
\quad \text{for each\ }r\in \mathbb{R}.
\]
\begin{lemma}\label{unif}
Let $\{a_N\}_{N}$ be a sequence of positive real numbers such that 
$\{a_N/\sqrt{N-1}\}_{N}$ converges to a positive real number $\alpha$ as $N\to \infty$.
%%%
For $N\in \mathbb{N}$, set 
\[
\varpi_N:=\sup_{r\in (-a_N, a_N)} \frac{w_N(r)}{w_\infty(r)}, \qquad
A_N:=\frac{a_N^2 -\alpha^2(N-2)}{a_N}.
\]
%%%%
Then $\{\varpi_N\}_{N}$ is bounded if and only if 
$\{A_N\}_{N}$ is bounded from above.
\end{lemma}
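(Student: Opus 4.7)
\medskip
\noindent
\textbf{Proof proposal.}
My plan is to reduce the supremum defining $\varpi_N$ to a one-variable concave optimization in the variable $r^2$, locate its maximizer explicitly, and read off the sharp quantitative dependence on $A_N$.

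First I would write $I_N:=\int_{-a_N}^{a_N}s_N(\rho)^{N/2-1}d\rho$ and use the stated Stirling asymptotic $I_N\to\sqrt{2\pi}\alpha$ to observe that $C_N:=\sqrt{2\pi}\alpha/I_N$ is bounded between two positive constants. Then
\[
\frac{w_N(r)}{w_\infty(r)}=C_N\exp\bigl(h_N(r^2)\bigr),\qquad h_N(t):=\Bigl(\tfrac{N}{2}-1\Bigr)\log\!\Bigl(1-\tfrac{t}{a_N^2}\Bigr)+\tfrac{t}{2\alpha^2},
\]
for $t\in[0,a_N^2)$. Consequently $\{\varpi_N\}$ is bounded if and only if $\sup_{t\in[0,a_N^2)}h_N(t)$ is bounded from above in $N$.

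Next I would analyze $h_N$. A direct computation gives $h_N'(t)=-(N/2-1)/(a_N^2-t)+1/(2\alpha^2)$ and $h_N''(t)<0$, so $h_N$ is strictly concave with unique stationary point $t_N^*=a_N^2-\alpha^2(N-2)=a_N A_N$. Thus if $A_N\leq 0$ then $h_N$ is nonincreasing on $[0,a_N^2)$ and $\sup h_N=h_N(0)=0$; if $A_N>0$ then the supremum is attained at $t_N^*$ and a direct substitution with $\beta_N:=a_N^2/(\alpha^2(N-2))$, so that $1-t_N^*/a_N^2=1/\beta_N$ and $t_N^*=\alpha^2(N-2)(\beta_N-1)$, yields
\[
h_N(t_N^*)=\frac{N-2}{2}\,\bigl(\beta_N-1-\log\beta_N\bigr)=\frac{N-2}{2}\,\phi(\beta_N),
\]
where $\phi(x):=x-1-\log x\geq 0$ on $(0,\infty)$.

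Finally I would translate the asymptotic size of $h_N(t_N^*)$ back into one on $A_N$ using the identity $\beta_N-1=a_NA_N/(\alpha^2(N-2))$ together with $a_N/\sqrt{N-1}\to\alpha$. For the ``if'' direction, assuming $A_N\leq C$, the identity shows $\beta_N\to 1$ uniformly, so the Taylor expansion $\phi(1+y)=y^2/2+O(y^3)$ gives $\phi(\beta_N)\leq(\beta_N-1)^2$ for large $N$, whence $h_N(t_N^*)\leq a_N^2A_N^2/\bigl(2\alpha^4(N-2)\bigr)$, which stays bounded. For the ``only if'' direction I would argue by contrapositive: if $A_{N_j}\to\infty$ along a subsequence, then either $\beta_{N_j}-1\to 0$, in which case $h_{N_j}(t_{N_j}^*)\sim A_{N_j}^2/(4\alpha^2)\to\infty$; or $\beta_{N_j}-1$ stays bounded below by a positive constant (possibly along a further subsequence), in which case $\phi(\beta_{N_j})$ is bounded below by a positive constant by positivity and continuity of $\phi$ on compact subsets of $(1,\infty)$, forcing $h_{N_j}(t_{N_j}^*)\to\infty$; if $\beta_{N_j}\to\infty$, then $\phi(\beta_{N_j})\sim\beta_{N_j}\to\infty$ and the conclusion is even stronger. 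The most delicate step is the intermediate regime where $A_N$ is large but $\beta_N-1$ is only moderately large, which is handled by the continuity/positivity argument for $\phi$ just described.
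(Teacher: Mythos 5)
Your argument is correct and essentially the same as the paper's: both locate the unique stationary point of $\log(w_N/w_\infty)$ at $r^2 = a_N A_N$, evaluate there, and bound the result above and below in terms of $A_N^2$; your $\phi(\beta_N)=\beta_N-1-\log\beta_N$ is exactly the paper's auxiliary function $f_2(s)=\log(1-s)+s/(1-s)$ evaluated at $s=r_N^2/a_N^2$, via the change of variables $\beta_N=(1-r_N^2/a_N^2)^{-1}$. One small remark: the hypothesis $a_N/\sqrt{N-1}\to\alpha$ already forces $\beta_N\to1$, so the extra subcases in your contrapositive (where $\beta_{N_j}-1$ stays bounded away from zero or diverges) are vacuous, though including them does no harm.
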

\begin{proof}
For $r\in (-a_N, a_N)$, we compute
\begin{align*}
\frac{d}{dr} \log \frac{w_N(r)}{w_\infty(r)}
&=-\frac{(N-2)r}{a_N^2-r^2 }+\frac{r}{\alpha^2}
=\frac{r}{\alpha^2(a_N^2-r^2)}\left\{a_N^2- \alpha^2\left(N-2\right)-r^2\right\}.
\end{align*}
In the case of $a_N^2- \alpha^2\left(N-2\right)\leq 0$, we see that 
\[
\varpi_N=\frac{w_N(0)}{w_\infty(0)}=
\left( \int_{-a_N}^{a_N} s_N(r)^{\frac{N}{2}-1} dr\right)^{-1}
\cdot \sqrt{2\pi}\alpha
\xrightarrow{N\to\infty}1.
\]
Thus if all $N \in \mathbb{N}$ except a finite number satisfy $a_N^2- \alpha^2\left(N-2\right)\leq 0$, 
then $\{\varpi_N\}_{N}$ is bounded and $\{A_N\}_{N}$ is bounded from above.

Assume that $a_N^2- \alpha^2\left(N-2\right)> 0$, that is, $A_N>0$ for infinitely many $N \in \mathbb{N}$.
For such~$N$ with $N>2$, we set
\begin{align*}
r_N:=\sqrt{a_N^2- \alpha^2\left(N-2\right)}=\sqrt{a_NA_N}.
\end{align*}
Then we find that $r_N<a_N$ and 
\begin{align*}
\varpi_N&=\frac{w_N(r_N)}{w_\infty(r_N)}=\frac{w_N(-r_N)}{w_\infty(-r_N)},\\
\log \frac{w_N(r_N)}{w_\infty(r_N)} 
&=\log \frac{w_N(0)}{w_\infty(0)}
+
\left(\frac{N}2-1\right) \log\left(1-\frac{r_N^2}{a_N^2}\right)+\frac{r_N^2}{2\alpha^2}\\
%%%
&=\log \frac{w_N(0)}{w_\infty(0)}
+
\left(\frac{N}2-1\right) \left\{\log\left(1-\frac{r_N^2}{a_N^2}\right)+\frac{r_N^2}{a_N^2-r_N^2}\right\} .
\end{align*}
%%%
Since $f_1(s):=\log (1-s)$ is strictly concave on $(-\infty,1)$ and $f_1(0)=0, f_1'(0)=-1$, 
it turns out that 
\[
\log \frac{w_N(r_N)}{w_\infty(r_N)} -\log \frac{w_N(0)}{w_\infty(0)}
=\left(\frac{N}2-1\right) f_1\left(\frac{r_N^2}{a_N^2}\right)+\frac{r_N^2}{2\alpha^2}
<
-\left(\frac{N}2-1\right)\frac{r_N^2}{a_N^2} 
+\frac{r_N^2}{2\alpha^2}
=\frac{A_N^2}{2\alpha^2}.
\]
On the other hand, if we set 
\[
f_2(s):= \log(1-s)+\frac{s}{1-s}\qquad \text{for\ }s\in (-2,1),
\]
then
\[
f_2'(s)= \frac{s}{(1-s)^2}, \qquad
f_2''(s)= \frac{1+s}{(1-s)^3},\qquad
f_2'''(s)= \frac{2(2+s)}{(1-s)^4}
> 0,
\]
consequently,
\begin{align*}
\log \frac{w_N(r_N)}{w_\infty(r_N)} -\log \frac{w_N(0)}{w_\infty(0)}
&=\left(\frac{N}2-1\right) f_2\left(\frac{r_N^2}{a_N^2}\right)
%%%
> \left(\frac{N}2-1\right)\frac12\left(\frac{r_N^2}{a_N^2}\right)^2 f_2''(0)
=\frac{N-2}{4a_N^2}A_N^2.
\end{align*}
Thus $\{\varpi_N\}_{N}$ is bounded if and only if $\{A_N\}_{N}$ is bounded from above.
This completes the proof of the lemma.
\end{proof}
%%%%%%%%%%%%%%%%%%%%%%%%%

\begin{lemma}\label{eigen1}
Let $\{a_N\}_{N}$, $\{\theta_N\}_{N}$ be sequences of real numbers so that 
$a_N>0$ and $\theta_N\in (0,\pi)$ for $N \in \mathbb{N}$.
If  there exist $\alpha>0$ and $R\in \mathbb{R}$ such that 
\[
\lim_{N\to \infty} \frac{a_N}{\sqrt{N-1}}=\alpha, \qquad
\lim_{N\to \infty} a_N \cos \theta_N=\alpha R,
\]
then
\[
\sup_{N\in \mathbb{N}}\lambda(B^N_{a_N\theta_N},\mathbb{S}^N(a_N)) <\infty.
\]
\end{lemma}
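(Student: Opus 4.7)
The plan is to bound $\lambda(B^N_{a_N\theta_N},\mathbb{S}^N(a_N))$ via the Rayleigh principle with the explicit test function $u_N(z):=z_1-a_N\cos\theta_N$, where $z=(z_i)_{i=1}^{N+1}\in\mathbb{S}^N(a_N)$. This is smooth, positive on $B^N_{a_N\theta_N}$, and vanishes on $\partial B^N_{a_N\theta_N}=\{z_1=a_N\cos\theta_N\}$, hence lies in $H^1_0(B^N_{a_N\theta_N})$. Writing $\vartheta:=d_{\mathbb{S}^N(a_N)}(z,a_N e_1^{N+1})$, one has $z_1=a_N\cos(\vartheta/a_N)$, so $u_N$ is the radial function $a_N(\cos(\vartheta/a_N)-\cos\theta_N)$. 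Projecting the Euclidean gradient of $z_1$ onto the tangent space of $\mathbb{S}^N(a_N)$ yields $|\nabla u_N|^2_{\mathbb{S}^N(a_N)}=1-z_1^2/a_N^2=\sin^2(\vartheta/a_N)$.

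Applying the radial integration formula \eqref{int} and substituting $t=\vartheta/a_N$, the Rayleigh quotient reduces to
\[
\frac{\int_{B^N_{a_N\theta_N}}|\nabla u_N|^2\,d\vol_{\mathbb{S}^N(a_N)}}{\int_{B^N_{a_N\theta_N}}u_N^2\,d\vol_{\mathbb{S}^N(a_N)}}
=\frac{1}{a_N^2}\cdot\frac{\int_0^{\theta_N}\sin^{N+1}(t)\,dt}{\int_0^{\theta_N}(\cos t-\cos\theta_N)^2\sin^{N-1}(t)\,dt},
\]
so it suffices to prove that the right-hand side is bounded in $N$.

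For the asymptotic analysis I would use the substitution $u=\sqrt{N-1}(\pi/2-t)$, setting $T_N:=\sqrt{N-1}(\pi/2-\theta_N)$. The hypotheses force $\cos\theta_N\to 0$, hence $\theta_N\to\pi/2$, and moreover $T_N\to R$. The elementary bound $\cos s\leq e^{-s^2/2}$ on $(0,\pi/2)$ produces the uniform upper bound
\[
\int_0^{\theta_N}\sin^{N+1}(t)\,dt=\frac{1}{\sqrt{N-1}}\int_{T_N}^{\sqrt{N-1}\pi/2}\cos^{N+1}(u/\sqrt{N-1})\,du\leq\frac{\sqrt{2\pi}}{\sqrt{N-1}}.
\]
For the denominator I would restrict to the window $u\in[T_N+1,T_N+2]$, which lies inside $[T_N,\sqrt{N-1}\pi/2]$ for large $N$ because $T_N$ converges. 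On that window the mean value theorem gives $\sin(u/\sqrt{N-1})-\sin(T_N/\sqrt{N-1})\geq c_1(u-T_N)/\sqrt{N-1}$, and the Taylor expansion of $\log\cos$ gives $\cos^{N-1}(u/\sqrt{N-1})\geq c_2$, with $c_1,c_2>0$ uniform in $N$ since $u/\sqrt{N-1}\to 0$ on this window. Integrating yields a lower bound of order $(N-1)^{-3/2}$, so the ratio of integrals is $O(N-1)$, and after dividing by $a_N^2\sim\alpha^2(N-1)$ one obtains the required uniform bound.

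The main obstacle is arranging uniform constants in the denominator estimate: both the MVT comparison for $\sin$ and the Taylor bound for $\log\cos$ require the arguments $u/\sqrt{N-1}$ and $T_N/\sqrt{N-1}$ to be small, which rests on the boundedness $T_N\to R$ provided by the hypothesis $a_N\cos\theta_N\to\alpha R$. For the finitely many initial indices $N$ below the asymptotic threshold, each $\lambda(B^N_{a_N\theta_N},\mathbb{S}^N(a_N))$ is finite individually, so taking the supremum over all $N\in\mathbb{N}$ preserves finiteness.
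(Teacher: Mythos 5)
Your proof is correct, and it takes a genuinely different route from the paper's. The paper's argument is non-variational: it computes that the spherical volume fraction of $B^N_{a_N\theta_N}$ converges to $\gamma^1_\alpha(\alpha R,\infty)\in(0,1)$, shrinks the ball to a $\theta_N'$ of half that volume fraction, invokes domain monotonicity to compare $\lambda(B^N_{a_N\theta_N},\mathbb{S}^N(a_N))\leq\lambda(B^N_{a_N\theta_N'},\mathbb{S}^N(a_N))=a_N^{-2}\lambda(B^N_{\theta_N'},\mathbb{S}^N(1))$, and then cites the Friedland--Hayman monotonicity theorem (\cite{FH}) to conclude that $\lambda(B^N_{\theta_N'},\mathbb{S}^N(1))/N$ stays bounded. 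Your argument instead produces a clean, self-contained $O(1)$ bound directly from the Rayleigh principle using the affine test function $u_N(z)=z_1-a_N\cos\theta_N$, for which $|\nabla u_N|^2_{\mathbb{S}^N(a_N)}=1-z_1^2/a_N^2$; reducing by \eqref{int} to a ratio of one-dimensional weighted integrals, you then carry out the Laplace-type asymptotics explicitly via the substitution $u=\sqrt{N-1}(\pi/2-t)$. The trade-off is that the paper's route is shorter but relies on an external (and somewhat deep) monotonicity result, whereas your route is entirely elementary and extracts the $O(1/N)$ and $O(N^{-3/2})$ scalings by hand; it also naturally produces the quantitative bound $\lambda(B^N_{a_N\theta_N},\mathbb{S}^N(a_N))\lesssim 1/\alpha^2$ for large $N$, which the paper's proof does not make explicit. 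Two small checks worth noting: the hypotheses do force $\theta_N\to\pi/2$ (from $\cos\theta_N=(a_N\cos\theta_N)/a_N\to 0$), and $T_N\to R$ follows from $\arcsin(\cos\theta_N)\sim\cos\theta_N$; and your window $[T_N+1,T_N+2]$ remains inside $[T_N,\sqrt{N-1}\pi/2]$ with both endpoints divided by $\sqrt{N-1}$ tending to $0$, which is what keeps the constants $c_1,c_2$ uniform regardless of the sign of $T_N$.
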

\begin{proof}
Assume $n,2 \leq N$.
We see that 
\begin{align*}
\frac{\vol_{\mathbb{S}^N(a_N)}(B^N_{a_N\theta_N})}{\vol_{\mathbb{S}^N(a_N)}(\mathbb{S}^N(a_N))}
%%%%
=\frac{\vol_{\mathbb{S}^N(1)}(B^N_{\theta_N})}{\vol_{\mathbb{S}^N(1)}(\mathbb{S}^N(1))} 
=\frac{\displaystyle\int_0^{\theta_N} \sin^{N-1} \theta d\theta}{\displaystyle\int_{0}^{\pi} \sin^{N-1} \theta d\theta}
=\int_{a_N \cos \theta_N}^{a_N}w_N(r) dr.
%%%
\end{align*}
By an argument similar to \eqref{expbdd} with Stirling's approximation, we find that 
\[
w_N(r) \mathbbm{1}_{(a_N \cos \theta_N,a_N)}(r) \leq \frac{1}{\sqrt{\pi}\alpha} e^{-\frac{r^2}{4\alpha^2}} \qquad\text{on\ }r\in\mathbb{R}.
\]
Then the dominated convergence theorem yields 
\[
\lim_{N\to \infty}\frac{\vol_{\mathbb{S}^N(a_N)}(B^N_{a_N\theta_N})}{\vol_{\mathbb{S}^N(a_N)}(\mathbb{S}^N(a_N))}
=\gamma_{\alpha}^1(\alpha R,\infty)\in (0,1).
\]

Let $\theta_N'\in (0,\pi)$ satisfy 
\[
\frac{\vol_{\mathbb{S}^N(a_N)}(B^N_{a_N\theta'_N})}{\vol_{\mathbb{S}^N(a_N)}(\mathbb{S}^N(a_N))}
=\frac{\vol_{\mathbb{S}^N(1)}(B^N_{\theta'_N})}{\vol_{\mathbb{S}^N(1)}(\mathbb{S}^N(1))} 
=\frac{\gamma_{\alpha}^1(\alpha R,\infty)}{2}.
\]
Then, for all $N \in \mathbb{N}$ except a finite number, 
we see that $\theta_N \geq \theta_N'$ hence
\begin{equation}\label{compari}
\lambda(B^N_{a_N\theta_N},\mathbb{S}^N(a_N))
\leq \lambda(B^N_{a_N\theta'_N},\mathbb{S}^N(a_N))
=\frac{1}{a_N^2} \lambda(B^N_{\theta'_N},\mathbb{S}^N(1))
\end{equation}
by the domain monotonicity of eigenvalues (see~\cite{Ch}*{Section~I.5}).
Since the right-hand side in~\eqref{compari} is bounded due to the monotonicity due to Friedland and Hayman~\cite{FH}*{Theorem~2} as mentioned in the introduction, 
this concludes the proof of the lemma.
\end{proof}
%%%%%%%%%%%%%%%%%%%%%%%%%%%

\begin{proof}[Proof of Theorem~\rm{\ref{main2}}]
Set
\[
\lambda_N:=\lambda(B^N_{a_N\theta_N},\mathbb{S}^N(a_N) ), \qquad I_N:=(a_N \cos \theta_N, a_N), 
\qquad I:=(\alpha R, \infty).
%, \qquad \gamma:=\gamma_{\alpha}^1.
\] 
Then $I_N\subset I$ for any $N\in \mathbb{N}$ by the assumption.
Notice that the density of $\gamma_{\alpha}^1$ with respect to the one-dimensional Lebesgue measure is $w_\infty$.
%%%%%%%%

For a nontrivial solution $\varphi_N$ to~{\rm (D$^N$)} for $(a,\theta)=(a_N,\theta_N)$, 
define $h_N\in C^\infty(I_N)\cap C(\overline{I_N})$ by
\[
h_N(r):=\varphi_N\left( a_N \cdot \arccos\left(\frac{r}{a_N}\right) \right). 
\]
%%%%%%%%%%%
A direct computation provides 
%%%%%%%%
\[
\begin{cases}
\displaystyle
L_N h_N 
=-\lambda_N h_N & \text{in}\ I_N,\\
%%%
h_N>0 & \text{in}\ (a_N \cos \theta_N, a_N],\\
%%%
h_N(a_N \cos \theta_N)=0,
%%%
\end{cases} 
\]
%%%
where $L_N:C^\infty(I_N) \to C^\infty(I_N)$ is defined for $f\in C^\infty(I_N)$ by 
\[
L_N f(r)
:=s_N(r)f''(r)- \frac{Nr}{a_N^2}f'(r) .
\]
%%%%%%%%%%%%%
We can assume that
\[
\int_{I_N} h_N(r)^2 w_N(r)dr
={\displaystyle \int_0^{a_N \theta_N} \varphi_N(\theta)^2 \sin^{N-1}\left(\frac{\theta}{a_N}\right) d\theta}
\cdot \left({\displaystyle \int_{-a_N}^{a_N} s_N(r)^{\frac{N}{2}-1} dr }\right)^{-1}
=1
\]
without loss of generality by~\eqref{int}.
We see that the  first positive Dirichlet eigenfunction  $\phi_N(z):=\varphi_N( d_{\mathbb{S}^N(a_N)}(z,a_N e_1^{N+1}))$ 
of $-\Delta_{\mathbb{S}^N(a_N)}$ on $B^N_{a_N \theta_N}$ satisfies 
\[
\int_{B_{a_N \theta_N}^N} \phi_N(z)^2 d\!\vol_{\mathbb{S}^N(a_N)}(z)=
\vol_{\mathbb{S}^{N-1}(a_N)}(\mathbb{S}^{N-1}(a_N)) \int_{-a_N}^{a_N} s_N(r)^{\frac{N}{2}-1}dr.
\]
An integration by parts leads to
\begin{align*}
\lambda_N
=\lambda_N \int_{I_N} h_N(r)^2 w_N(r) dr
%%%
=- \int_{I_N} ( L_N h_N(r)) h_N(r) w_N(r) dr
=
\int_{I_N} h'_N(r)^2 s_N(r) w_N(r)dr.
\end{align*}
Thus we find that 
\[
\int_{I}h_N(r)^2 \frac{w_N(r)}{w_\infty(r)} \mathbbm{1}_{I_N}(r) d\gamma_{\alpha}^1(r)=1, 
\qquad
\int_{I}h'_N(r)^2s_N(r)\frac{w_N(r)}{w_\infty(r)}\mathbbm{1}_{I_N}(r)d\gamma_{\alpha}^1(r)=\lambda_N.
\]
Moreover, an integration by parts yields 
\begin{align*}
\int_{I_N}r^2 h_N(r)^2 w_N(r) dr
&=
-\frac{a_N^2}{N}\int_{I_N} rh_N(r)^2 \left(s_N(r)w_N(r)\right)' dr\\
&=
\frac{a_N^2}{N}\int_{I_N} h_N(r)^2 s_N(r)w_N(r) dr
+
\frac{2a_N^2}{N}\int_{I_N} rh_N(r) h'_N(r) s_N(r)w_N(r) dr\\
%%%
&\leq \frac{a_N^2}{N}
+
\frac12\int_{I_N} r^2h_N(r)^2w_N(r)dr
+\frac{2a_N^4}{N^2}\int_{I_N}h'_N(r)^2 s_N(r)^2w_N(r) dr\\
%%%%
&\leq \frac{a_N^2}{N} \left(1+\frac{2a_N^2 \lambda_N}{N}\right)
+\frac12\int_{I_N} r^2h_N(r)^2w_N(r)dr,
\end{align*}
where we used Young's inequality in the first inequality.
This ensures that 
\[
\int_{I}r^2h_N(r)^2 \frac{w_N(r)}{w_\infty(r)} \mathbbm{1}_{I_N}(r) d\gamma_{\alpha}^1(r)
=\int_{I_N}r^2 h_N(r)^2 w_N(r) dr
\leq \frac{2a_N^2}{N} \left(1+\frac{2a_N^2 \lambda_N}{N}\right).
\]
Since $\{\lambda_N\}_{N}$ is bounded by Lemma~\ref{eigen1}, 
by the Banach--Alaoglu theorem, there exist a subsequence $\{N(m)\}_{m}$ of $\{N\}_{N}$, 
$h_\infty,\widetilde{h}_\infty \in L^2(I,\gamma_{\alpha}^1)$ and $\lambda\in \mathbb{R}$ such that
\begin{align*}
&h_{N(m)} \sqrt{\frac{w_{N(m)}}{w_\infty}}\mathbbm{1}_{I_{N(m)}}\to h_\infty, \\
&rh_{N(m)}(r)\sqrt{\frac{w_{N(m)}(r)}{w_\infty(r)}}\mathbbm{1}_{I_{N(m)}}\to rh_\infty,\\
&h'_{N(m)} \sqrt{s_{N(m)}\frac{w_{N(m)}}{w_\infty}}\mathbbm{1}_{I_{N(m)}}\to \widetilde{h}_\infty,
\end{align*}
weakly in $L^2(I,\gamma_{\alpha}^1)$ and $\lambda_{N(m)} \to \lambda$ as $m\to\infty$.
We see that $h_\infty$ is nonnegative almost everywhere in $I$.
%%%%%%
For $r\in I_N$, we calculate that 
\begin{align*}
\left( h_N (r)s_N(r)\sqrt{\frac{w_N(r)}{w_\infty(r)}}\right)'
&=
h'_N(r) s_N(r)\sqrt{\frac{w_N(r)}{w_\infty(r)}}
+\frac{r}{2}h_N(r) \sqrt{\frac{w_N(r)}{w_\infty(r)}}
\left(\frac{1}{\alpha^2}s_N(r)-\frac{N+2}{a_N^2}
\right).
\end{align*}
This with the boundedness of 
\begin{align*}
\sup_{N\in \mathbb{N}}\sqrt{s_N(r)}<\infty, \qquad
\sup_{N\in \mathbb{N}}\sup_{r\in I_N}\left|\left(\frac{1}{\alpha^2}s_N(r)-\frac{N+2}{a_N^2}
\right)
\right|<\infty
%%%%%
\end{align*}
implies that 
\[
\left( h_{N(m)}s_{N(m)}\sqrt{\frac{w_{N(m)}}{w_\infty}}\mathbbm{1}_{I_{N(m)}}\right)'
\to \widetilde{h}_\infty
\qquad
\text{weakly in $L^2(I,\gamma_{\alpha}^1)$}
\]
as $m\to\infty$.
By the compact Sobolev embedding on $\Gamma^1_{\alpha}$ (see~\cite{Hooton}*{Theorem~3.1} and also \cite{CZ}*{Section~6}),
we can extract a subsequence, still denoted by $\{N(m)\}_{m}$, such that 
\[
h_{N(m)} s_{N(m)}\sqrt{\frac{w_{N(m)}}{w_\infty}} \mathbbm{1}_{I_{N(m)}} \to h_\infty
\qquad
\text{weakly in $H^1_0(I, \gamma_{\alpha}^1)$ and strongly in $L^2(I, \gamma_{\alpha}^1)$}
\]
as $m\to \infty$, where $h'_\infty= \widetilde{h}_\infty$.
Moreover, we find that 
\[
h_{N(m)} \sqrt{\frac{w_{N(m)}}{w_\infty}} \mathbbm{1}_{I_{N(m)}}(1-s_{N(m)})\to 0 
\qquad \text{strongly in $ L^2(I,\gamma_{\alpha}^1)$ as $m\to \infty$}
\]
and hence
\begin{equation}\label{mass1}
\int_{I} h_\infty(r)^2 d\gamma_{\alpha}^1(r)=1.
\end{equation}
%%%%
% see memo.
%%%%
For $f\in H_0^1(I,\gamma_{\alpha}^1)$, we observe from Lemma~\ref{unif} that 
$\{f' \sqrt{s_Nw_N/w_\infty}\mathbbm{1}_{I_N}\}_N$ converges to $f'$ strongly in $ L^2(I,\gamma_{\alpha}^1)$ as $N\to \infty$
and  compute
\begin{align*}
\int_I h'_\infty(r) f' (r) d\gamma_{\alpha}^1(r)
&=
\lim_{m\to \infty} \int_{I_{N(m)}} h_{N(m)}'(r)s_{N(m)}(r) f'(r) w_{N(m)}(r) dr\\
%%%
&=-\lim_{m\to \infty} \int_{I_{N(m)}} \left( s_{N(m)}(r) h_{N(m)}''(r)-\frac{Nr}{a_{N(m)}^2} h_{N(m)}'(r)\right) f(r) w_{N(m)}(r) dr\\
%%%
&=\lim_{m\to \infty} \int_{I_{N(m)}} \lambda_{N(m)} h_{N(m)} f(r) w_{N(m)}(r) dr\\
%%%%%%
&=\lambda \int_{I} h_\infty(r) f(r) d\gamma_{\alpha}^1(r),
\end{align*}
which ensures that $h_\infty$ is a weak solution to the Dirichlet eigenvalue problem of $-\Delta_{\gamma_{\alpha}^1}$ on~$I$.
By the elliptic regularity theory (see~\cite{GT}*{Theorem~7.10 and Corollary~8.11} for instance),
$h_\infty$ is a Dirichlet eigenfunction of $-\Delta_{\gamma_{\alpha}^1}$ on~$I$ of eigenvalue $\lambda$.
Since $h_\infty$ is nonnegative on $I$, $h_\infty$ is a first positive Dirichlet eigenfunction
and hence $\lambda=\lambda(I,\gamma_{\alpha}^1)=\lambda(V_{\alpha R}^n, \gamma_{\alpha}^n)$.
Thus $ \{\lambda_N\}_{N}$ converges to $\lambda(V_{\alpha R}^n, \gamma_{\alpha}^n)$ as $N\to \infty$.
Moreover, it follows from \eqref{mass1} that $\{h_N s_N\sqrt{w_N/w_\infty}\mathbbm{1}_{I_N}\}_{N}$ converges to $h_\infty$ strongly in $H_0^1(I,\gamma_{\alpha}^1)$
as $N\to \infty$.

If we define 
\[
\psi_N(x):=h_N(x_1)s_N(x_1)\sqrt{\frac{w_N(x_1)}{w_\infty(x_1)}}\mathbbm{1}_{I_N}(x_1),
\qquad
\psi_\infty(x):=h_\infty(x_1),
\qquad
\text{for\ }x=(x_i)_{i=1}^n\in \overline{V_{\alpha R}^n}
\]
then $\psi_N, \psi_\infty \in H_0^1(V_{\alpha R}^n, \gamma^n_{\alpha})$ 
and $\{\psi_N\}_{N}$ converges to $\psi_\infty$ strongly in $H^1_0(V_{\alpha R}^n, \gamma^n_{\alpha})$.
Moreover $\psi_N $ satisfies \eqref{dproj}
and $\psi_\infty$ is the first positive Dirichlet eigenfunction $\psi_\infty$ of $-\Delta_{\gamma_\alpha^n}$ on $V_{\alpha R}^n$
satisfying 
\[
\int_{V_{\alpha R}^n} \psi_\infty(x)^2 d\gamma^n_{\alpha}(x)
=\int_I h_\infty(r)^2d\gamma_{\alpha}^1(r)=1.
\]
Thus the proof is complete.
\end{proof}

\section{Projection of Dirichlet eigenspace on high-dimensional sphere}
We briefly recall some facts of the Dirichlet eigenvalue problem on a ball in a sphere.
See~\cite{Ch}*{Sections~II.5, XII.5} for details.
%%%%

The Dirichlet eigenvalue problem on $B^N_{a\theta}$ in $\mathbb{S}^N(a)$ is reduced to a Sturm--Liouville problem of the form 
\[
\tag{D$_k^N$}
\begin{cases}
\displaystyle
\varphi''(\vartheta)+ (N-1)\frac{\cos (\vartheta/a)}{a \sin (\vartheta/a)}\varphi'(\vartheta)=-\left(\lambda-\frac{\lambda_k(\mathbb{S}^{N-1}(1))}{a^2\sin^2(\vartheta/a)} \right) \varphi(\vartheta)
& \text{in\ $[0,a\theta)$},\\
\varphi(a\theta)=0, &
\end{cases} 
\]
for some $k\in \mathbb{N}_0$.
The collection of $\lambda\in \mathbb{R}$ for which there exists a nontrivial solution $\varphi\in C^2([0,a\theta)) \cap C([0,a\theta])$ to ({D$_k^N$})
consists of a sequence 
\[
0< \lambda_{k,1}(B^N_{a\theta}, \mathbb{S}^N(a))<\lambda_{k,2}(B^N_{a\theta}, \mathbb{S}^N(a))<\cdots <\lambda_{k,j}(B^N_{a\theta}, \mathbb{S}^N(a))<\cdots\uparrow \infty,
\]
and 
$\lambda_{k,j}(B^N_{a\theta}, \mathbb{S}^N(a))$ determines a one-dimensional linear space of solutions for each 
$j\in \mathbb{N}$.
The set of Dirichlet eigenvalues on $B^N_{a\theta}$ is given by
\[
\bigcup_{k\in \mathbb{N}_0, j\in \mathbb{N}} \{ \lambda_{k,j}(B^N_{a\theta}, \mathbb{S}^N(a)) \}.
\]
%%%%
Let $(r,\theta)$ denote polar geodesic coordinates about $ae_1^{N+1}$ in $\mathbb{S}^N(a)$, that is,
\[
\left(r(z), \theta(z)\right):=\left(d_{\mathbb{S}^N(a)}(z, ae_1^{N+1}), \frac{(z_i)_{i=2}^{N}}{ \sqrt{a^2-z_1^2}} \right)
\qquad \text{on\ }z=(z_i)_{i=1}^N \in \mathbb{S}^N(a)\setminus\{\pm ae_1^{N+1}\}.
\]
%
%Notice that 
%\[
%a^2-z_1^2=a^2 \sin^2\frac{r(z)}{a}.
%\]
Given a solution $ \varphi_{N,k,j}$ to {\rm (D$_k^N$)} for $\lambda=\lambda_{k,j}(B^N_{a\theta}, \mathbb{S}^N(a))$ 
and $\Phi \in E_{k}(\mathbb{S}^{N-1}(1))$, 
define a function $\phi_{N,k,j}(\Phi;\cdot)$ on $z\in \overline{B^N_{a\theta}}\setminus\{ a e_1^{N+1} \}$ by 
%%%%
\begin{equation}\label{de}
\phi_{N,k,j}(\Phi;z)
:=\varphi_{N,k,j}(r(z)) \Phi(\theta(z)).
\end{equation}
The function $\phi_{N,k,j}(\Phi;\cdot)$ can be extended to $z=a e_1^{N+1}$ smoothly 
and becomes a Dirichlet eigenfunction on $B^N_{a\theta}$ of eigenvalue $\lambda_{k,j}(B^N_{a\theta}, \mathbb{S}^N(a))$.
Let $E_{k,j}(B^N_{a\theta}, \mathbb{S}^N(a))$ denote the linear space of all Dirichlet eigenfunctions 
$\phi_{N,k,j}(\Phi;\cdot)$ on $B^N_{a\theta}$ of eigenvalue $\lambda_{k,j}(B^N_{a\theta}, \mathbb{S}^N(a))$
given by the form~\eqref{de}.
Then the linear space of all Dirichlet eigenfunctions on $B^N_{a\theta}$ coincides with
\[
\bigoplus_{k\in \mathbb{N}_0, j\in \mathbb{N}} E_{k,j}(B^N_{a\theta}, \mathbb{S}^N(a)).
\]
Notice that
\[
\dim E_{k,j}(B^N_{a\theta}, \mathbb{S}^N(a))=\dim E_{k}(\mathbb{S}^{N-1}(1)). 
\]

A similar argument implies that 
the Dirichlet eigenvalue problem on $V_{\alpha R}^n$ in $\Gamma^n_{\alpha}$ is reduced to a Sturm--Liouville problem of the form
\[
\tag{P\!$_{k}$}
\begin{cases}
\displaystyle
\Delta_{\gamma^1_{\alpha}} h=-\left(\lambda -\frac{k}{\alpha^2} \right) h & \text{in\ $(\alpha R,\infty)$},\\
h(\alpha R)=0,
\end{cases}
\]
for some $k\in \mathbb{N}_0$.
The collection of $\lambda\in \mathbb{R}$ for which there exists a nontrivial solution $h\in C^2((\alpha R,\infty)) \cap C([\alpha R,\infty))$ to~(P\!$_{k}$)
consists of a sequence 
\[
0< \lambda_{k,1}(V_{\alpha R}^n, \Gamma^n_{\alpha})<\lambda_{k,2}(V_{\alpha R}^n, \Gamma^n_{\alpha})<\cdots <\lambda_{k,j}(V_{\alpha R}^n, \Gamma^n_{\alpha})<\cdots\uparrow \infty,
\]
and $\lambda_{k,j}(V_{\alpha R}^n, \Gamma^n_{\alpha})$ determines a one-dimensional linear space of solutions for each $j\in \mathbb{N}$.
The set of Dirichlet eigenvalues on $V_{\alpha R}^n$ is given by
\[
\bigcup_{k\in \mathbb{N}_0, j\in \mathbb{N}} \{ \lambda_{k,j}(V_{\alpha R}^n, \Gamma^n_{\alpha}) \}.
\]
Given a solution $h_{k,j} $ to {\rm (P\!$_k$)} for $\lambda=\lambda_{k,j}(V_{\alpha R}^n, \Gamma^n_{\alpha})$ 
and \text{$K=(K_i)_{i=2}^n \in \mathbb{N}_0^{n-1}(k)$}, define a function 
$\psi_{K,j}$ on $x=(x_i)_{i=1}^n\in \overline{V_{\alpha R}^n}$ by 
%%%%
\begin{equation}\label{deG}
\psi_{K,j}(x)
:=
\begin{cases}
\displaystyle h_{k,j}(x) &\text{if $n =1$}, \\
\displaystyle h_{k, j}(x_1) \prod_{i=2}^n H_{K_i}(\alpha^{-1}x_i) &\text{if $n\geq 2$},
\end{cases}
\end{equation}
where $H_k$ is the $k$th order Hermite polynomial given by \eqref{hermite}.
Then $\psi_{K,j}$ is a Dirichlet eigenfunction on $V_{\alpha R}^n$ of eigenvalue $\lambda_{k,j}(V_{\alpha R}^n, \Gamma^n_{\alpha})$.
Let $E_{k,j}(V_{\alpha R}^n, \Gamma^n_{\alpha})$ denote the linear space of all Dirichlet eigenfunctions $\psi_{K,j}$
on $V_{\alpha R}^n$ of eigenvalue $\lambda_{k,j}(V_{\alpha R}^n, \Gamma^n_{\alpha})$ given by the form \eqref{deG}.
Then the linear space of all Dirichlet eigenfunctions on $V_{\alpha R}^n$ coincides with
\[
\bigoplus_{k\in \mathbb{N}_0, j\in \mathbb{N}} E_{k,j}(V_{\alpha R}^n, \Gamma^n_{\alpha}).
\]
Notice that 
\[
\dim E_{k,j}(V_{\alpha R}^n, \Gamma^n_{\alpha})=d_k(n-1),
\]
where we set $d_{k}(0):=1$.

Let $\Omega=B^N_{a\theta}$ if $M=\mathbb{S}^N(a)$, and $\Omega=V_{\alpha R}^n$ if $M=\Gamma^n_{\alpha}$.
The first Dirichlet eigenvalue $\lambda(\Omega, (M,\mu))$ is $\lambda_{0,1}(\Omega, (M,\mu))$ and 
the multiplicity of $\lambda(\Omega, (M,\mu))$ is $1$.
However, $\lambda_{k,j}(\Omega, (M,\mu))=\lambda_{k',j'}(\Omega, (M,\mu))$ may happen for distinct pairs $(k,j), (k',j')\in \mathbb{N}_0 \times \mathbb{N}$.
%%%%%%%%%%%%%%%%%%%%%%%%%%

As a counterpart of $E_{k}^{n}(\mathbb{S}^N(a))$, we define 
\begin{align*}
E_{k,j}^{n}(B^N_{a\theta}, \mathbb{S}^N(a))
:=
\left\{ \phi \in E_{k,j}(B^N_{a\theta}, \mathbb{S}^N(a))
 \biggm| 
\begin{array}{l}
\phi=\phi_{N,k,j}(\Phi;\cdot) \text{\ defined in~\eqref{de} such that} \\
\phi(x,y)=\phi(x, |y|_2 e_1^{N-n+1}) \text{\ on\  }(x,y)\in B^N_{a\theta}
\end{array}
\right\}.
\end{align*}
By the definition and Lemma~\ref{hori},
we immediately find the following.
%%%%%%%%%%%%%%%%%%%%%%%%%%%%%%%%%%%%%%%%%%%%%%%%%%%%
%%%%%%%%%%%%%%%%%%%%%%%%%%%%%%%%%%%%%%%%%%%%%%%%%%%%
%%%%%%%%%%%%%%%%%%%%%%%%%%%%%%%%%%%%%%%%%%%%%%%%%%%%
%%%%%%%%%%%%%%%%%%%%%%%%%%%%%%%%%%%%%%%%%%%%%%%%%%%%
%%%
\begin{proposition}
Fix $N,j\in \mathbb{N}$ with $2\leq N$, $k\in \mathbb{N}_0$, $a>0$ and $\theta \in (0,\pi)$. 
Let $\phi_{N,k,j}(\Phi;\cdot)$ be a Dirichlet eigenfunction on $B^N_{a\theta}$ of eigenvalue $\lambda_{k,j}(B^N_{a\theta}, \mathbb{S}^N(a))$ defined in~\eqref{de}.

The linear space $E_{k,j}^{1}(B^N_{a\theta}, \mathbb{S}^N(a))$ is nontrivial if and only if $k=0$,
where $E_{0,j}^{1}(B^N_{a\theta}, \mathbb{S}^N(a))$ is spanned by 
\[
\{\phi_{N,0,j}(\mathbbm{1}_{\mathbb{S}^{N-1}(1)};\cdot)\}
\]
and hence $\dim E_{0,j}^{1}(B^N_{a\theta}, \mathbb{S}^N(a))=1$.

For $n\in \mathbb{N}$ with $2\leq n\leq N$, $E_{k,j}^{n}(B^N_{a\theta}, \mathbb{S}^N(a))$ is spanned by 
\begin{align*}
\left\{ \phi_{N,k,j}(P|_{\mathbb{S}^{N-1}(1)};\cdot)\right\}_{P\in E_{k}^{n-1}(\mathbb{S}^{N-1}(1))}.
\end{align*}
%%%%
In the sequel, $\dim E_{k,j}^{n}(B^N_{a\theta}, \mathbb{S}^N(a))=d_k(n-1)$.
\end{proposition}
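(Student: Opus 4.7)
The plan is to translate the invariance condition defining $E^n_{k,j}(B^N_{a\theta}, \mathbb{S}^N(a))$ into a condition on the spherical factor $\Phi$ appearing in \eqref{de}, and then invoke Lemma~\ref{hori}. Identify $\mathbb{R}^{N+1} = \mathbb{R}^n\times\mathbb{R}^{N-n+1}$ and write $z = (x,y)$ so that $z_1 = x_1$. Then the radial factor $\varphi_{N,k,j}(r(z)) = \varphi_{N,k,j}(a\arccos(x_1/a))$ depends only on $x$. Under the splitting $\mathbb{R}^N = \mathbb{R}^{n-1}\times\mathbb{R}^{N-n+1}$, the spherical coordinate reads $\theta(z) = (x_2,\ldots,x_n, y_1,\ldots,y_{N-n+1})/\sqrt{a^2 - x_1^2} \in \mathbb{S}^{N-1}(1)$. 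Because $|y|_2 = \sqrt{a^2 - |x|_2^2}$ is determined by $x$ on $\mathbb{S}^N(a)$, the condition $\phi(x,y) = \phi(x, |y|_2 e_1^{N-n+1})$ is equivalent to $\Phi\circ\theta$ being a function of $x$ alone, which in turn is equivalent to $\Phi$ being invariant under the $O(N-n+1)$-action on the last $N-n+1$ coordinates of $\mathbb{S}^{N-1}(1)$.

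When $n=1$, this forces $\Phi$ to be constant on $\mathbb{S}^{N-1}(1)$. Since a nontrivial constant lies in $E_k(\mathbb{S}^{N-1}(1))$ only for $k=0$, we conclude $E^1_{k,j} \neq 0$ iff $k=0$, in which case $E^1_{0,j}$ is spanned by $\phi_{N,0,j}(\mathbbm{1}_{\mathbb{S}^{N-1}(1)};\cdot)$. When $n\geq 2$, the invariance of $\Phi$ under rotations of the last $N-n+1$ coordinates means exactly that $\Phi$ factors through the projection $p^{N-1}_{n-1}\colon \mathbb{R}^N \to \mathbb{R}^{n-1}$ on the sphere. Applying Lemma~\ref{hori} with $(N,n,a)$ replaced by $(N-1,n-1,1)$ identifies such $\Phi\in E_k(\mathbb{S}^{N-1}(1))$ as precisely $E^{n-1}_k(\mathbb{S}^{N-1}(1))$. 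Thus the map $P\mapsto \phi_{N,k,j}(P|_{\mathbb{S}^{N-1}(1)};\cdot)$ gives a linear isomorphism $E^{n-1}_k(\mathbb{S}^{N-1}(1)) \to E^n_{k,j}(B^N_{a\theta}, \mathbb{S}^N(a))$, and the dimension equality $\dim E^n_{k,j}(B^N_{a\theta},\mathbb{S}^N(a)) = d_k(n-1)$ follows from Theorem~\ref{main1}.

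The only potential obstacle is the bookkeeping of the coordinate splitting: one must verify that the leftover $n-1$ Cartesian coordinates $x_2,\ldots,x_n$ occupy exactly the first $n-1$ slots of $\theta(z)$, so that ``invariance in the last $N-n+1$ coordinates'' aligns with the definition of $E^{n-1}_k(\mathbb{S}^{N-1}(1))$ (which is phrased via projection onto the first $n-1$ coordinates). Once this identification is made explicit, the argument is purely a substitution into Lemma~\ref{hori} with no additional analysis required.
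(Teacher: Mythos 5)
Your proposal is correct and follows the same route the paper takes: the paper's one-line proof ``by the definition and Lemma~\ref{hori}'' is precisely the argument you spell out. You correctly observe that since the radial factor $\varphi_{N,k,j}(r(z))$ depends only on $z_1=x_1$ and that $\theta(z)$'s first $n-1$ slots are $x_2,\ldots,x_n$ (scaled), the invariance $\phi(x,y)=\phi(x,|y|_2e_1^{N-n+1})$ is exactly the $O(N-n+1)$-invariance of $\Phi$ on $\mathbb{S}^{N-1}(1)$, which Lemma~\ref{hori} (with $(N,n,a)\mapsto(N-1,n-1,1)$) characterizes as $\Phi\in E^{n-1}_k(\mathbb{S}^{N-1}(1))$; the $n=1$ case degenerates to $\Phi$ constant, giving $k=0$.
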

%%%%
%%%%%%%%%%%
Given $n,N \in \mathbb{N}$ with $2\leq n \leq N$ and $K\in \mathbb{N}_0^{n-1}$, define $R_{N,n,K;a}\in\mathbb{P}(n)$ by
\begin{align*}
&R_{N,n,K;a}(x_1, x'):=\sum_{j=0}^{[|K|/2]} (a^2-|x|_2^2)^j C_j(N-n) \Delta^{j}_{\mathbb{R}^{n-1}} x'^K
\qquad \text{for\ } x=(x_1,x')\in \mathbb{R}\times\mathbb{R}^{n-1}.
\end{align*}
Then, for $z=(x,y)\in B^N_{a\theta}\setminus\{a e_1^{N+1}\} \subset \mathbb{R}^n\times \mathbb{R}^{N-n+1}$,
it turns out that
\[
P_{N-1,n-1,K}(\theta(z))=\left(a^2-x_1^2\right)^{-\frac{|K|}{2}} R_{N,n,K;a}(x)
\]
and hence 
\begin{align*}
 \phi_{N,|K|,j}(P_{N-1,n-1,K}|_{\mathbb{S}^{N-1}(1)}; z)
=& \varphi_{N,|K|,j}(r(z))P_{N-1, n-1,K}(\theta(z))\\
%=&
%\varphi_{N,|K|,j}(r(z))
%\cdot \left(a \sin\frac{r(z)}{a}\right)^{-|K|} R_{N,n,K;a}(x)\\
=&\varphi_{N,|K|,j}\left(a \cdot \arccos\left(\frac{x_1}{a}\right)\right)
\cdot \left(a^2-x_1^2\right)^{-\frac{|K|}{2}} R_{N,n,K;a}(x).
\end{align*}
%
%%%%
To establish a counterpart of Theorem~\ref{main2} for higher Dirichlet eigenvalues and their eigenfunctions,
we may need a uniform estimate of $\lambda_{k,j}(B^N_{a_N\theta_N}, \mathbb{S}^N(a_N))$ with respect to $N\in \mathbb{N}$ as well as Lemma~\ref{eigen1}
and a detailed analysis of $\lambda_{k,j}(V_{\alpha R}^1, \Gamma_\alpha^1)$.

\quad

%\medskip

\begin{ack}
The author would like to thank Shouhei Honda for
fruitful conversations on this topic,  
to Daisuke Kazukawa for his careful reading and advice  to improve Corollary~\ref{cor}, 
and to Tatsuya Tate for his comments and providing relevant references.
She is also grateful to Kazuhiro Ishige and Paolo Salani for providing motivation and encouragement. 
She is pleased to acknowledge the hospitality of 
Dipartimento di Matematica e Informatica``U. Dini", Universit\`a di Firenze
where part of this work was performed.
%%%%%
She also  would like to thank an anonymous referee for their careful reading and comments.

The author was supported in part by JSPS KAKENHI Grant Number %19H05599, 19H01786, 19H01800, 
19K03494
and by International Research Experience and Enhancement for Young Researcher of Tokyo Metropolitan University.
\end{ack}

\medskip

\begin{bibdiv}
 \begin{biblist}
%%%%%
\bib{AH2}{article}{
 author={Ambrosio, Luigi},
 author={Honda, Shouhei},
 title={New stability results for sequences of metric measure spaces with
 uniform Ricci bounds from below},
 conference={
 title={Measure theory in non-smooth spaces},
 },
 book={
 series={Partial Differ. Equ. Meas. Theory},
 publisher={De Gruyter Open, Warsaw},
 },
 date={2017},
 pages={1--51},
}
\bib{AH}{article}{
 author={Ambrosio, Luigi},
 author={Honda, Shouhei},
 title={Local spectral convergence in ${\rm RCD}^*(K,N)$ spaces},
 journal={Nonlinear Anal.},
 volume={177},
 date={2018},
% number={part A},
 part={part A},
 pages={1--23},
 }
%%%%%
\bib{BF}{article}{
 author={Bertrand, J\'er\^ome},
 author= {Fathi,Max},
 title={Stability of eigenvalues and observable diameter in \rm{RCD}$(1,\infty)$ spaces},
  journal={arXiv:2107.05324},
 date={2021},
}
%%%%%%%%%
\bib{Bog}{book}{
 author={Bogachev, Vladimir I.},
 title={Gaussian measures},
 series={Mathematical Surveys and Monographs},
 volume={62},
 publisher={American Mathematical Society, Providence, RI},
 date={1998},
 pages={xii+433},
 }
%%%%%%%%%
\bib{Bo}{article}{
 author={Borell, Christer},
 title={The Brunn-Minkowski inequality in Gauss space},
 journal={Invent. Math.},
 volume={30},
 date={1975},
% number={2},
 pages={207--216},
}
%%%%%%
\bib{CS}{book}{
 author={Caffarelli, Luis},
 author={Salsa, Sandro},
 title={A geometric approach to free boundary problems},
 series={Graduate Studies in Mathematics},
 volume={68},
 publisher={American Mathematical Society, Providence, RI},
 date={2005},
 pages={x+270},
 }
%%%%%%%
\bib{Ch}{book}{
 author={Chavel, Isaac},
 title={Eigenvalues in Riemannian geometry},
 series={Pure and Applied Mathematics},
 volume={115},
 note={Including a chapter by Burton Randol; With an appendix by Jozef Dodziuk},
 publisher={Academic Press, Inc., Orlando, FL},
 date={1984},
 pages={xiv+362},
}
%%%%%
\bib{CZ}{article}{
 author={Cheng, Xu},
 author={Zhou, Detang},
 title={Eigenvalues of the drifted Laplacian on complete metric measure
 spaces},
 journal={Commun. Contemp. Math.},
 volume={19},
 date={2017},
 % number={1},
 pages={1650001, 17},
 }
%%%%%
\bib{DF}{article}{
 author={Diaconis, Persi},
 author={Freedman, David},
 title={A dozen de Finetti-style results in search of a theory},
% language={English, with French summary},
 journal={Ann. Inst. H. Poincar\'{e} Probab. Statist.},
 volume={23},
 date={1987},
% number={2, suppl.},
 pages={397--423},
 }
%%%%
%%%%
%%%%%%
 %%%%%
\bib{FH}{article}{
 author={Friedland, S.},
 author={Hayman, W. K.},
 title={Eigenvalue inequalities for the Dirichlet problem on spheres and
 the growth of subharmonic functions},
 journal={Comment. Math. Helv.},
 volume={51},
 date={1976},
% number={2},
 pages={133--161},
 }
 %%%%%
\bib{Funano}{article}{
 author={Funano, Kei},
 title={Estimates of Gromov's box distance},
 journal={Proc. Amer. Math. Soc.},
 volume={136},
 date={2008},
% number={8},
 pages={2911--2920},
 issn={0002-9939},
 }
 %%%%
\bib{GMS}{article}{
 author={Gigli, Nicola},
 author={Mondino, Andrea},
 author={Savar\'{e}, Giuseppe},
 title={Convergence of pointed non-compact metric measure spaces and
 stability of Ricci curvature bounds and heat flows},
 journal={Proc. Lond. Math. Soc. (3)},
 volume={111},
 date={2015},
% number={5},
 pages={1071--1129},
}
%%%%%
\bib{GT}{book}{
 author={Gilbarg, David},
 author={Trudinger, Neil S.},
 title={Elliptic partial differential equations of second order},
 series={Classics in Mathematics},
 note={Reprint of the 1998 edition},
 publisher={Springer-Verlag, Berlin},
 date={2001},
 pages={xiv+517},
 }
%%%%%
\bib{Hidabook}{book}{
author={Hida, Takeyuki},
title={Brownian motion},
series={Applications of Mathematics},
volume={11},
note={Translated from the Japanese by the author and T. P. Speed},
publisher={Springer-Verlag, New York-Berlin},
date={1980},
pages={xvi+325},
}
%%%%%%
\bib{HN}{article}{
author={Hida, Takeyuki},
author={Nomoto, Hisao},
title={Gaussian measure on the projective limit space of spheres},
journal={Proc. Japan Acad.},
volume={40},
date={1964},
}
%%%%%
\bib{Hooton}{article}{
 author={Hooton, James G.},
 title={Compact Sobolev imbeddings on finite measure spaces},
 journal={J. Math. Anal. Appl.},
 volume={83},
 date={1981},
% number={2},
 pages={570--581},
}
%%%%%
\bib{Kazukawa}{article}{
 author = {{Kazukawa}, Daisuke},
 title = {Convergence of energy functionals and stability of lower bounds of Ricci curvature via metric measure foliation},
 journal={to appear in Comm. Anal. Geom (arXiv:1804.00407)},
 }
%%%%%
\bib{Ke}{article}{
 author={Ketterer, Christian},
 title={Obata's rigidity theorem for metric measure spaces},
 journal={Anal. Geom. Metr. Spaces},
 volume={3},
 date={2015},
% number={1},
 pages={278--295},
 }
%%%%%%%%%%
\bib{Le}{book}{
   author={L\'{e}vy, Paul},
   title={Probl\`emes concrets d'analyse fonctionnelle. Avec un compl\'{e}ment sur
   les fonctionnelles analytiques par F. Pellegrino},
   language={French},
   note={2d ed},
   publisher={Gauthier-Villars, Paris},
   date={1951},
}
%%%%
\bib{LV}{article}{
 author={Lott, John},
 author={Villani, C\'{e}dric},
 title={Weak curvature conditions and functional inequalities},
 journal={J. Funct. Anal.},
 volume={245},
% date={2007},
 number={1},
 pages={311--333},
}
%%%%%
\bib{Mc}{article}{
author={McKean, H. P.},
title={Geometry of differential space},
journal={Ann. Probability},
volume={1},
date={1973},
pages={197--206},
}
%%%%%
\bib{Mil2018}{article}{
 author={Milman, Emanuel},
 title={Spectral estimates, contractions and hypercontractivity},
 journal={J. Spectr. Theory},
 volume={8},
 date={2018},
% number={2},
 pages={669--714},
}
%%%%
%%%%%
\bib{PS}{article}{
author={Peterson, Amy},
author={Sengupta, Ambar N.},
title={Polynomials and high-dimensional spheres},
journal={Nonlinear Anal.},
volume={187},
date={2019},
pages={18--48},
}
%%%%%	
\bib{ST}{article}{
 author={Shioya, Takashi},
 author={Takatsu, Asuka},
 title={High-dimensional metric-measure limit of Stiefel and flag
 manifolds},
 journal={Math. Z.},
 volume={290},
 date={2018},
% number={3-4},
 pages={873--907},
}
%%%%%
\bib{SC}{article}{
 author={Sudakov, V. N.},
 author={Cirel\cprime son, B. S.},
 title={Extremal properties of half-spaces for spherically invariant
 measures},
 language={Russian},
 note={Problems in the theory of probability distributions, II},
 journal={Zap. Nau\v{c}n. Sem. Leningrad. Otdel. Mat. Inst. Steklov. (LOMI)},
 volume={41},
 date={1974},
 pages={14--24, 165},
}
\bib{UK}{article}{
author={Umemura, Yasuo},
author={K\^{o}no, Norio},
title={Infinite dimensional Laplacian and spherical harmonics},
journal={Publ. Res. Inst. Math. Sci. Ser. A},
volume={1},
date={1965/1966},
pages={163--186},
}
%%%%
\bib{Wi}{article}{
   author={Wiener, Norbert},
   title={Differential-space},
   journal={J. Math. and Phys.},
   volume={2},
   date={1923},
   pages={131--174},
}
%%%%%%%%%
\bib{ZZ}{article}{
 author={Zhang, Hui-Chun},
 author={Zhu, Xi-Ping},
 title={Weyl's law on $RCD^*(K,N)$ metric measure spaces},
 journal={Comm. Anal. Geom.},
 volume={27},
 date={2019},
% number={8},
 pages={1869--1914},
}
%%%%%
\end{biblist}
\end{bibdiv}
\end{document}